\documentclass[11pt]{amsart}
\usepackage{amssymb,amsmath,amsthm}
\usepackage{amsfonts}
\usepackage{enumitem}
\usepackage{dsfont}
\usepackage{cite}
\usepackage[colorlinks, citecolor=red]{hyperref}
\usepackage{mathrsfs}
\usepackage{epsfig}
\usepackage{lscape}
\usepackage{subfigure}
\usepackage{epstopdf}
\usepackage{caption}
\usepackage{algorithm}
\usepackage{algpseudocode}
\usepackage{multirow}
\usepackage{geometry}
\usepackage{url}
\usepackage{graphicx,cite}
\usepackage{longtable}
\usepackage{tikz}

\newtheorem{definition}{Definition}[section]

\newtheorem{theorem}[definition]{Theorem}
\newtheorem{lemma}[definition]{Lemma}

\newtheorem{remark}[definition]{Remark}

\newtheorem{example}[definition]{Example}

\newtheorem{assumption}{Assumption}[section]
\date{}

\begin{document}
	\baselineskip 18pt
	\bibliographystyle{plain}
	\title[RIM-PS for GAVE]{Randomized iterative methods with Polyak step-size for solving generalized absolute value equations}
	
	\author{Jiayun Chen}
	\address{School of Mathematical Sciences, Beihang University, Beijing, 100191, China. }
	\email{jiayunchen@buaa.edu.cn}
	
	\author{Qiye Zhang}
	\address{School of Mathematical Sciences, Beihang University, Beijing, 100191, China. }
	\email{zhangqiye@buaa.edu.cn}
	
	\author{Deren Han}
	\address{LMIB of the Ministry of Education, School of Mathematical Sciences, Beihang University, Beijing, 100191, China.}
	\email{handr@buaa.edu.cn}
	
		\author{Jiaxin Xie}
	\address{LMIB of the Ministry of Education, School of Mathematical Sciences, Beihang University, Beijing, 100191, China.}
	\email{xiejx@buaa.edu.cn}

	\begin{abstract}
		In this paper, we systematically incorporate the Polyak step-size into the randomized iterative method to improve its efficiency for solving generalized absolute value equations. In particular, we adopt the Polyak step-size within a stochastic iterative setting where the objective function updates dynamically at every step, unlike the classical Polyak step-size designed for deterministic optimization with fixed objective functions. Consequently, this novel implementation differs from the conventional Polyak scheme and demands a dedicated convergence analysis. We rigorously analyze the convergence properties of the proposed method and establish its linear convergence in expectation. Numerical experiments demonstrate that the incorporation of the Polyak step-size substantially improves the computational performance of randomized iterative methods with constant step-sizes.
	\end{abstract}
	
	\maketitle
	
	\let\thefootnote\relax\footnotetext{Key words: absolute value equations, randomized iterative method, Polyak step-size, linear convergence, Kaczmarz}
	
	\let\thefootnote\relax\footnotetext{Mathematics subject classification (2020): 15A06, 68W20, 90C30, 90C33}
	
	\section{Introduction}
	The generalized absolute value equation (GAVE)
	\begin{equation}\label{gave}
		Ax - B|x| = b,
	\end{equation}
	where $A,B\in \mathbb{R}^{m\times n}$, $b\in \mathbb{R}^{m}$, $|x|=(|x_1|, \dots, |x_n|)^\top$, and $\top$ denotes the transpose, has attracted increasing attention due to its relevance across multiple fields, such as the linear complementarity problem (LCP)~\cite{Mangasarian2006,Rohn2004}, biometrics~\cite{Dang2022}, game theory~\cite{Zhou2023}, and fluid mechanics~\cite{Brugnano2008}. Consequently, extensive research efforts have been devoted to analyzing the solvability~\cite{Hladik2023,Mangasarian2006,Wu2020,Wu2021,Xie2025,Kumar2024}, error bounds~\cite{Zamani2023,Xie2025,Li2025}, and iterative solvers~\cite{Cao2021,Rohn2014,Wang2019,Li2020,Yu2022,Zhou2021,Yu2024,Luo2026,Mangasarian2007,Alcantara2023,Xie2025,Chen2025,Chen2022,Li2016,Mangasarian2009,ChenYongxin2025,Li2023,Zhang2023,Dai2024,Mangasarian2007Concave} of the GAVE.
	
	In recent years, a wide variety of iterative approaches have been proposed for solving the GAVE, covering Newton-type methods~\cite{Cao2021,Mangasarian2009,Li2016,Wang2019,Yu2024,Chen2025}, Picard iterations~\cite{Rohn2014}, matrix splitting~\cite{Li2023,Zhou2021,Yu2024,Chen2022,ChenYongxin2025,Zhang2023,Dai2024}, as well as concave minimization-based strategies~\cite{Mangasarian2007,Mangasarian2007Concave}. For some further comments, we refer the reader to \cite{Hladik2026}. Nevertheless, nearly all the above-mentioned algorithms are limited to square coefficient matrices satisfying $m=n$. As far as we know, merely three solvers are available to tackle the non-square GAVE setting. The successive linearization algorithm (SLA) transforms the original problem into a concave minimization task and solves it through a finite series of linear programs~\cite{Mangasarian2007}. The method of alternating projections (MAP) recasts the absolute value equation as a two-set feasibility problem and proceeds by alternating projections onto the corresponding feasible regions~\cite{Alcantara2023}. Most recently, a flexible randomized iterative method, which unifies multiple classic algorithms as its special instances and enjoys linear convergence, is proposed in~\cite{Xie2025}. Randomized iterative methods retain the simplicity and computational efficiency of stochastic algorithms while attaining rapid convergence. However, such existing schemes adopt fixed constant step-sizes; see Section \ref{section2-3} for further details, and step-size selection is widely recognized to exert a crucial impact on the convergence performance of iterative algorithms~\cite{Yuan2008}. Recently, adaptive stepsize strategies have been developed for the randomized Kaczmarz method to handle inconsistent linear systems~\cite{Zeng2023RK,Zeng2025ASEIM}, while adaptive momentum techniques have also been proposed that automatically adjust parameters without requiring prior knowledge of the problem~\cite{Zeng2024ASHBM,han2024randomized}. These developments demonstrate the potential of dynamic stepsize and momentum adjustment within randomized frameworks. 
	
	The Polyak step-size \cite{Polyak1987} constitutes a classic adaptive rule free of hand-tuned parameters. It has garnered substantial attention within the optimization community for its remarkable convergence acceleration capability~\cite{Hutchinson2025,Wang2023,Loizou2021,Orvieto2022,Jiang2023,Zhang2025,Oikonomou2025}. In particular, when deployed for convex objectives with a known optimal function value, the Polyak step-size provably achieves optimality in convergence rate under certain regularity conditions~\cite{Polyak1987,Camerini2009,Brannlund1995}.
	The Polyak step-size has also been generalized to stochastic regimes, giving rise to the stochastic Polyak step-size (SPS)~\cite{Loizou2021}. Theoretical analysis establishes convergence guarantees for SPS over strongly convex, convex, and nonconvex objectives alike, and numerical validations confirm its practical superiority across a range of learning tasks \cite{Loizou2021}. Subsequent variants such as decreasing SPS~\cite{Orvieto2022} and adaptive SPS~\cite{Jiang2023} further remove the need for exact optimal function values and guarantee convergence to the exact solution under non-interpolation. The Polyak step-size has further been integrated with momentum-based acceleration techniques, including the heavy-ball method and gradient moving average~\cite{Wang2023}. Such hybrid schemes attain linear convergence and mitigate sensitivity to the momentum hyperparameter of the heavy-ball method~\cite{Wang2023,Oikonomou2025,Zhang2025}.
	
	In this paper, we systematically integrate the Polyak step-size into randomized iterative frameworks to enhance computational efficiency when solving the GAVE. Different from classical Polyak rules constructed for deterministic optimization with static objective functions, our proposed algorithm embeds the Polyak step-size into a stochastic iteration scheme. In this setting, the objective function is dynamically updated at each stochastic gradient descent (SGD) iteration. Such a novel construction deviates from existing Polyak variants and calls for a dedicated theoretical convergence study. Accordingly, we conduct a rigorous convergence analysis for the newly developed method and derive its expected linear convergence guarantee. Additionally, we perform extensive numerical experiments to validate that incorporating the Polyak step-size can effectively improve the computational performance of traditional randomized iterative methods with constant step-sizes.
	
	The remainder of the paper is organized as follows.
	After introducing some preliminaries in Section \ref{sec:prelim}, we present and analyze the randomized iterative method with Polyak step-size in Section \ref{sec:ARABK}. In Section~\ref{sec:numerical}, we perform some numerical experiments to show
	the effectiveness of the proposed method.
	Finally, we conclude the paper in Section \ref{sec:conclusion}.

	\section{Basic notations and preliminaries}\label{sec:prelim}
	
	\subsection{Basic notations}
	For a vector \( x \in \mathbb{R}^n \), we denote its \( i \)-th element, transpose, and Euclidean norm by \( x_i \), \( x^\top \), and \( \|x\|_2 \), respectively. The vector \( e_i \in \mathbb{R}^n \) represents the \( i \)-th standard basis vector, whose \( i \)-th element is one and all others are zero. We also write \( e = (1, 1, \dots, 1)^\top \in \mathbb{R}^n \). For a matrix \( A \in \mathbb{R}^{m \times n} \), we use \( A_{i,j} \), \( A_{i,:} \), and \( A_{:,j} \) to denote its \( (i,j) \)-th entry, \( i \)-th row, and \( j \)-th column, respectively. The transpose, spectral norm, Frobenius norm, and column space of \( A \) are denoted by \( A^\top \), \( \|A\|_2 \), \( \|A\|_F \), and \( \operatorname{Range}(A) \), respectively. We let \( \sigma_{\min}(A) \) and \( \sigma_{\max}(A) \) denote the smallest nonzero singular value and the largest singular value of \( A \), respectively, and \( \kappa_A \) denotes its condition number. If \( A \) is symmetric, we use \( \lambda_{\min}(A) \) and \( \lambda_{\max}(A) \) to denote its smallest and largest eigenvalues, respectively. 
	For an integer $m\geq 1$, let $[m]:=\{1,\ldots,m\}$.	Given $\mathcal{T}\subset[m]$, the cardinality
	of the set $\mathcal{T}$ is denoted by $\operatorname{card}(\mathcal{T})$. We use  $A_{\mathcal{T},:}$ to denote the row  submatrix indexed by $\mathcal{T} $.
	
	We denote the solution set of the GAVE by
	\[
	\mathcal{X}^* := \{ x \in \mathbb{R}^n : Ax - B|x| = b \}.
	\]
	The distance from \( x \) to \( \mathcal{X}^* \) is defined as
	\[
	\operatorname{dist}_{\mathcal{X}^*}(x) := \inf_{y \in \mathcal{X}^*} \|x - y\|_2.
	\]
	For random variables \( \xi_1 \) and \( \xi_2 \), we use \( \mathbb{E}[\xi_1] \) to denote the expectation of \( \xi_1 \), and \( \mathbb{E}[\xi_1 \mid \xi_2] \) to denote the conditional expectation of \( \xi_1 \) given \( \xi_2 \). If \( f \) is differentiable, we denote its gradient by \( \nabla f \).
	
	\subsection{Polyak step-size}
	\label{section2-2}
	
	This subsection briefly reviews the Polyak step-size~\cite{Polyak1987} and its stochastic variants~\cite{Loizou2021}. Consider the following unconstrained convex optimization problem
	\begin{equation}\label{opt_prob}
		\min_{x\in\mathbb{R}^n} f(x).
	\end{equation}
	To solve this problem, the gradient descent (GD) method takes the form
	\begin{equation}\label{GD}
		x^{k+1} = x^k - \alpha_k \nabla f(x^k),
	\end{equation}
	where $\alpha_k > 0$ is the step-size. If $f$ is $L$-smooth, that is,
	\[
	\|\nabla f(x) - \nabla f(y)\| \le L\|x-y\|,\quad \forall x,y\in\mathbb{R}^n,
	\]
	then a common choice for the step-size is $\alpha_k = 1/L$.

	However, in many practical scenarios, the objective function \( f \) may be nondifferentiable, or the smoothness parameter \( L \) may be unknown or difficult to estimate. In such cases, the Polyak step-size~\cite{Polyak1987} offers an elegant alternative that does not rely on \( L \). For any \( x \), we define the subgradient of \( f \) as
	\[
	\partial f(x) := \left\{ g \in \mathbb{R}^n \mid f(y) \ge f(x) + \langle g, y-x \rangle,\ \forall y \in \mathbb{R}^n \right\}.
	\]
	Given the update \( x^{k+1} = x^k - \alpha_k g_k \), the Polyak step-size is then defined as
	\begin{equation}\label{polyak_step}
		\alpha_k = \frac{f(x^k) - f^*}{\|g_k\|^2},
	\end{equation}
	where \( f^* := \min_x f(x) \) denotes the optimal value and \( g_k \in \partial f(x^k) \). In particular, if \( f \) is differentiable, the subgradient \( g_k \) reduces to the gradient \( \nabla f(x^k) \).
	
	In fact, the Polyak step-size can be derived from a simple analysis. From the update formula, we have
	\[
	\|x^{k+1} - x^*\|_2^2 = \|x^k - x^*\|_2^2 + \alpha^2 \|g_k\|^2 - 2\alpha \langle g_k, x^k - x^* \rangle.
	\]
	The minimizer of the right-hand side with respect to \( \alpha \) is \( \alpha_k = \langle g_k, x^k - x^* \rangle / \|g_k\|_2^2 \), but this quantity depends on the unknown optimum \( x^* \). To address this issue, we exploit convexity to obtain \( \langle g_k, x^k - x^* \rangle \ge f(x^k) - f^* \). Replacing the inner product by this lower bound gives exactly the Polyak step-size in \eqref{polyak_step}. This choice is particularly attractive as it requires no tuning, adapts to the local landscape of \( f \), and typically achieves strong practical performance.
	
	Recently, the Polyak step-size has been extended to the stochastic setting. Consider the finite-sum optimization problem
	\begin{equation}\label{finite_sum}
		\min_{x\in\mathbb{R}^n} f(x) = \frac{1}{n} \sum_{i=1}^n f_i(x).
	\end{equation}
	For large-scale problems with a huge number of samples \( n \), computing the full gradient \( \nabla f(x) = \frac{1}{n}\sum_{i=1}^n \nabla f_i(x) \) is computationally prohibitive. To alleviate this issue, the stochastic gradient descent (SGD) method is adopted, which randomly samples an index \( i_k \) at each iteration and executes the following update rule:
	\begin{equation}\label{SGD}
		x^{k+1} = x^k - \gamma_k \nabla f_{i_k}(x^k).
	\end{equation}
	When each component function \( f_i \) is \( L_i \)-smooth, the classical step-size \( \gamma_k = 1/L_{i_k} \) is commonly adopted. For the general case where each \( f_i \) is merely convex and differentiable, the stochastic Polyak step-size (SPS) was proposed in~\cite{Loizou2021} to solve the aforementioned finite-sum problem \eqref{finite_sum}, with the explicit form
	\begin{equation}\label{SPS}
		\gamma_k^{\mathrm{SPS}} = \frac{f_{i_k}(x^k) - f_{i_k}^*}{c \|\nabla f_{i_k}(x^k)\|^2},
	\end{equation}
	where \( i_k \) is the randomly selected index at iteration \( k \), \( f_{i_k}^* := \min_x f_{i_k}(x) \), and \( c > 0 \) is a constant. We refer to~\cite{oikonomou2025safeguarded} for more details on the recent developments of the Polyak step-size.
	
	\subsection{Randomized iterative method for GAVE}
	\label{section2-3}
	
	In this subsection, we briefly review the randomized iterative method (RIM) for GAVE proposed in~\cite{Xie2025}. The core idea of RIM is to incorporate randomization into the iterations through a user-defined distribution \(\mathcal{D}\), from which randomized sketching matrices \(S \in \mathbb{R}^{m \times \ell}\) are drawn, and single-step stochastic gradient descent (SGD) is employed at each iteration.
	
	Specifically, starting from an initial point \(x^0 \in \mathbb{R}^n\), at the \(k\)-th iteration with current iterate \(x^k\), we consider the following stochastic optimization problem:
	\begin{equation}\label{min}
		\min_{x \in \mathbb{R}^n} f^k(x) := \mathbb{E}_{S \sim \mathcal{D}} \left[ f_S^k(x) \right],
	\end{equation}
	where \(f_S^k(x) := \frac{1}{2} \| S^\top (Ax - B|x^k| - b) \|_2^2\) with \(S \in \mathbb{R}^{m \times \ell}\) being a random variable drawn from \(\mathcal{D}\).
	Different from conventional static optimization formulations, the constructed problem \eqref{min} possesses a dynamically varying objective function that depends explicitly on the current iterative state $x^k$.
	
	Starting from \( x^k \) and applying a single SGD step, one can obtain the following iteration scheme
	\begin{equation}\label{update}
		x^{k+1} = x^k - \alpha_k \nabla f_{S_k}^k(x^k) = x^k - \alpha_k A^\top S_k S_k^\top (A x^k - B|x^k| - b).
	\end{equation} 
	This is exactly the RIM algorithm proposed in~\cite{Xie2025}. In \cite{Xie2025}, the step-size is set to \( \alpha_k = \alpha / \|S_k^\top A\|_2^2 \) with \( \alpha \in (0,1] \). For a fixed sampled matrix \( S_k \), the function \( f_{S_k}^k \) defined in \eqref{min} is \( L \)-smooth with Lipschitz constant \( L_{S_k} = \|S_k^\top A\|_2^2 \). Accordingly, the adopted step-size can be rewritten as \( \alpha / L_{S_k} \), which essentially serves as a fixed constant step-size in each iteration. Motivated by the remarkable performance of the Polyak step-size in accelerating iterative optimization, we embed this adaptive step-size strategy into the aforementioned dynamic optimization framework, thereby developing a novel RIM equipped with adaptive Polyak step-size for solving GAVEs.
	
	\subsection{Useful assumptions and lemmas}
	Let \(\mathcal{D}\) be a distribution from which the sketching matrices \(S \in \mathbb{R}^{m \times \ell}\) are drawn. The expectation over \(\mathcal{D}\) is denoted by \(\mathbb{E}_{S \sim \mathcal{D}}[\cdot]\). Throughout this paper, we impose the following basic assumption on \(\mathcal{D}\).
	\begin{assumption}\label{ass:1}
		\begin{enumerate}[label=(\roman*)]
			\item The sampling space of \(\mathcal{D}\) is finite, and \(\mathbb{E}_{S \sim \mathcal{D}}[SS^\top]\) is positive definite.
			\item For every \(S \in \mathcal{D}\), the matrix \(A^\top S\) has full column rank, where \(A \in \mathbb{R}^{m \times n}\) is the coefficient matrix in \eqref{gave}.
		\end{enumerate}
	\end{assumption}
	
	Next, we show  that Assumption~\ref{ass:1} holds for a broad class of coefficient matrices $A$ and sketching distributions $\mathcal{D}$. 
	
	\begin{example}\label{example1}
		Let \( \mathcal{D} \) have the sampling space \( \{e_i\}_{i=1}^m \), with each \( i \) sampled with probability \( 1/m \). Assume that every row of \( A \) is nonzero.  Then \( \mathbb{E}_{S \sim \mathcal{D}}[SS^\top] = \frac{1}{m} I \) is positive definite, and \( A^\top S = A_{i,:}^\top \), which is a nonzero vector and therefore has full column rank.
	\end{example}

	
	\begin{example}\label{example2}
		We consider uniform sampling of $\ell$ distinct indices to construct the index set $\mathcal{I} \subset [m]$ with $\operatorname{card}(\mathcal{I}) = \ell$, where $\ell$ denotes the block size. The total number of candidate index sets is $\binom{m}{\ell}$, and each set $\mathcal{I}$ is sampled uniformly with probability $\operatorname{Prob}(\mathcal{I}) = 1 \big/ \binom{m}{\ell}$. The corresponding sketching matrix is defined as $S = I_{:, \mathcal{I}}$.
		Assume that any $\ell$ distinct rows of $A$ are linearly independent. Then $\mathbb{E}_{S\sim\mathcal{D}}[SS^\top] = \frac{\ell}{m}I$ is positive definite, and $A^\top S = A_{\mathcal{I},:}^\top$ possesses full column rank.
	\end{example}
	
	\begin{example}\label{example3}
		Consider a partition of \([m]\) given by
		\[
		\begin{aligned}
			\mathcal{I}_i &= \{\varpi(j) : j = (i - 1)\ell + 1, (i - 1)\ell + 2, \dots, i\ell\}, \quad i = 1, 2, \dots, t - 1,\\
			\mathcal{I}_t &= \{\varpi(j) : j = (t - 1)\ell + 1, (t - 1)\ell + 2, \dots, m\}, \quad \text{with } \operatorname{card}(\mathcal{I}_t) \leq \ell,
		\end{aligned}
		\]
		where \(\varpi\) is a uniform random permutation on \([m]\) and \(\ell\) is the block size. We define
		$
		\|A\|_{\varpi,\ell} := \sqrt{\sum_{i=1}^t \|A_{\mathcal{I}_i,:}\|_2^2},
		$
		and at each iteration select an index \(i_k \in [t]\) with probability
		$
		\operatorname{Prob}(i_k = i) = \frac{\|A_{\mathcal{I}_i,:}\|_2^2}{\|A\|_{\varpi,\ell}^2},
		$
		then set \(S = I_{:,\mathcal{I}_{i_k}}\). Assume that any \(\ell\) distinct rows of \(A\) are linearly independent. Then \(\mathbb{E}_{S \sim \mathcal{D}}[SS^\top]\) is positive definite, and \(A^\top S = A_{\mathcal{I},:}^\top\) has full column rank.
	\end{example}
	
	We note that the condition that all \(\ell\)-row subsets are linearly independent appears restrictive in Examples~\ref{example2} and~\ref{example3}, yet it holds almost surely when the rows of \(A\) are independent, zero-mean, isotropic Gaussian random vectors~\cite{Mityagin2020}.
	
	The following lemmas are crucial for our  analysis.

	\begin{lemma}[{\cite[Lemma 2.3]{Lorenz2025}}]\label{lem:H}
		Let $A\in\mathbb{R}^{m \times n}$ with $A \neq 0$, $S \in \mathbb{R}^{m \times \ell}$ and $\mathcal{D}$ be a distribution. Suppose Assumption~\ref{ass:1} holds. Then the matrix $H := \mathbb{E}\left[ \frac{S S^\top}{\|A^\top S\|_2^2} \right]$ is positive definite.
	\end{lemma}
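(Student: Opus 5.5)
To show that $H$ is positive definite, I will compare it with the matrix $\mathbb{E}_{S\sim\mathcal{D}}[SS^\top]$, which is positive definite by Assumption~\ref{ass:1}(i). The point is that every weight $1/\|A^\top S\|_2^2$ is finite and strictly positive, and there are only finitely many such weights, so they are bounded below by a positive constant. No spectral information about $A$ is needed beyond this.

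First I check that every weight is well defined. By Assumption~\ref{ass:1}(ii), $A^\top S$ has full column rank for every $S$ in the sampling space. Since $S$ has at least one column, this gives $A^\top S\neq 0$, hence $\|A^\top S\|_2^2>0$, and the random matrix $SS^\top/\|A^\top S\|_2^2$ is well defined. The hypothesis $A\neq 0$ is consistent with this and is not needed separately.

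Next I bound the weights below. By Assumption~\ref{ass:1}(i), the sampling space $\{S_1,\dots,S_N\}$ is finite, with probabilities $p_j>0$. Set
\[
M:=\max_{j}\|A^\top S_j\|_2^2 .
\]
Then $M$ is a finite positive number, and $1/\|A^\top S_j\|_2^2\ge 1/M$ for every $j$.

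Finally I compare quadratic forms. For any $x\neq 0$,
\[
x^\top H x=\sum_{j} p_j\,\frac{\|S_j^\top x\|_2^2}{\|A^\top S_j\|_2^2}\ \ge\ \frac1M\sum_j p_j\|S_j^\top x\|_2^2=\frac1M\,x^\top\mathbb{E}[SS^\top]x>0 .
\]
The first inequality uses $\|S_j^\top x\|_2^2\ge 0$ together with the bound on the weights. The last inequality is the positive definiteness of $\mathbb{E}[SS^\top]$. Since $H$ is symmetric as a sum of symmetric matrices, it follows that $H\succeq \frac1M\mathbb{E}[SS^\top]\succ 0$.

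The only delicate step is making sure that no sampled $S$ gives $A^\top S=0$, which would make the weight infinite or undefined. Assumption~\ref{ass:1}(ii) excludes this, and finiteness of the sampling space ensures the supremum $M$ of the denominators is finite.
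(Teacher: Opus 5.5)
Your proof is correct. The paper does not prove this lemma; it imports it from \cite[Lemma 2.3]{Lorenz2025} without argument, so your proposal supplies a self-contained proof rather than reconstructing one from the text. Your route is a Loewner-order comparison. Finiteness of the sampling space makes $M=\max_{S}\|A^\top S\|_2^2$ finite, and Assumption~\ref{ass:1}(ii) makes every weight well defined and positive, so $H\succeq M^{-1}\,\mathbb{E}[SS^\top]$. This gives more than the lemma asks for, namely the explicit bound $\lambda_{\min}(H)\ge \lambda_{\min}(\mathbb{E}[SS^\top])/\max_{S}\|A^\top S\|_2^2$. The cost is that finiteness is used essentially, to bound the denominators uniformly. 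A pointwise argument avoids that dependence: for fixed $x\neq 0$, $\mathbb{E}\|S^\top x\|_2^2>0$ forces $S^\top x\neq 0$ with positive probability, and since each weight $1/\|A^\top S\|_2^2$ is strictly positive, $x^\top Hx>0$. That version extends to sketches with unbounded support, such as the Gaussian sketches in Section~\ref{sec:numerical} (which lie outside Assumption~\ref{ass:1}), provided $H$ is finite, but it gives no quantitative bound. Under the stated hypotheses both arguments are complete. Your observation that $A\neq 0$ already follows from Assumption~\ref{ass:1}(ii), since $\ell\ge 1$, is also correct.
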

	
	\begin{lemma}\label{lemma1}
		Assume \( A \in \mathbb{R}^{m \times n} \) and \( S \in \mathbb{R}^{m \times \ell} \) are such that \( A^\top S \) has full column rank. Then for any vector \( d \in \mathbb{R}^m \), it holds that \( S^\top d = 0 \) if and only if \( A^\top S S^\top d = 0 \).
	\end{lemma}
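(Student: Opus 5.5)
The plan is to prove the two implications separately; only one of them uses the rank hypothesis.

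The direction ``$S^\top d = 0 \Rightarrow A^\top S S^\top d = 0$'' is immediate. If $S^\top d=0$, then $A^\top S S^\top d = A^\top S (S^\top d) = A^\top S\cdot 0 = 0$, and no hypothesis on $A^\top S$ is needed.

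For the converse, set $y := S^\top d \in \mathbb{R}^{\ell}$. The assumption $A^\top S S^\top d = 0$ then reads $(A^\top S) y = 0$, so $y$ lies in the null space of the $n\times \ell$ matrix $A^\top S$. By hypothesis $A^\top S$ has full column rank $\ell$, so the rank--nullity theorem gives $\ker(A^\top S)=\{0\}$. Hence $y=0$, that is, $S^\top d=0$.

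There is no real obstacle here. The only point to be careful about is that the full column rank of $A^\top S$ is exactly injectivity of the map $y\mapsto A^\top S y$ on $\mathbb{R}^\ell$, and that is what the argument uses. An equivalent route, if one prefers, is to take the inner product with $y$: $0=\langle S^\top d, S^\top A A^\top S S^\top d\rangle=\|A^\top S y\|_2^2$ gives $A^\top S y=0$. The conclusion $y=0$ still comes from the same injectivity.
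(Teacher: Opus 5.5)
Your proof is correct and follows the paper's argument: the forward direction is direct multiplication, and the converse holds because full column rank of $A^\top S$ makes its null space trivial, which forces $S^\top d = 0$. The inner-product aside is harmless but not a second route, since it only recovers $A^\top S y = 0$, which was already the hypothesis.
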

	\begin{proof}
		\( (\Rightarrow) \) If \( S^\top d = 0 \), multiplying both sides  by \( A^\top S \) directly yields \( A^\top S S^\top d = 0 \).
		
		\( (\Leftarrow) \) Since \( A^\top S \) has full column rank, its null space contains only the zero vector. Hence, if \( A^\top S S^\top d = 0 \), then \( S^\top d = 0 \).
	\end{proof}
		%
	
	\section{RIM with Polyak Step-Size for GAVE}\label{sec:ARABK}
	
	In this section, we develop a Polyak step-size variant of RIM for solving the GAVE~\eqref{gave}. We first introduce the step-size based on the discussions in Sections~\ref{section2-2} and~\ref{section2-3}. We then establish its linear convergence in expectation and provide conditions under which the convergence factor is strictly less than one.
	
	Recall from \eqref{update} that the standard RIM iteration is formulated as
	\[
	x^{k+1} = x^k - \alpha_k \nabla f_{S_k}^k(x^k) 
	= x^k - \alpha_k A^\top S_k S_k^\top \big( A x^k - B|x^k| - b \big).
	\]
	Inspired by the stochastic Polyak step-size in \eqref{SPS}, we design an adaptive iteration-dependent step-size
	\[
	\tilde{\alpha}_k = \frac{f_{S_k}^k(x^k) - (f_{S_k}^k)^*}{\|\nabla f_{S_k}^k(x^k)\|_2^2},
	\]
	where \((f_{S_k}^k)^* := \min_x f_{S_k}^k(x)\) denotes the minimum value of  
	$
	f_{S_k}^k(x) = \frac{1}{2} \| S_k^\top (A x - B|x^k| - b) \|_2^2.
	$
	Under Assumption~\ref{ass:1}, \(A^\top S_k\) has full column rank, which implies that the linear system
	$
	S_k^\top A x = S_k^\top (B|x^k| + b)
	$
	is consistent. Consequently, the optimal value satisfies \((f_{S_k}^k)^* = 0\). Hence we have
	\begin{equation}\label{beta}
		\tilde{\alpha}_k = \frac{1}{2} \cdot \frac{\| S_k^\top (A x^k - B|x^k| - b) \|_2^2}{\| A^\top S_k S_k^\top (A x^k - B|x^k| - b) \|_2^2}.
	\end{equation}
	Furthermore, we introduce a relaxation parameter \(\zeta \in (0,2)\) and define the final adaptive Polyak step-size as
	\begin{equation}\label{eq:alpha_k}
		\alpha_k =
		\begin{cases}
			(2 - \zeta) L_{\text{adap}}^{(k)}, & \text{if } S_k^\top (A x^k - B|x^k| - b) \neq 0, \\
			0, & \text{otherwise},
		\end{cases}
	\end{equation}
	where
	\[
	L_{\text{adap}}^{(k)} := \frac{\| S_k^\top (A x^k - B|x^k| - b) \|_2^2}{\| A^\top S_k S_k^\top (A x^k - B|x^k| - b) \|_2^2}.
	\]
	Under Assumption~\ref{ass:1}(ii), \(A^\top S_k\) has full column rank and thus, by Lemma~\ref{lemma1}, \(S_k^\top (A x^k - B|x^k| - b) \neq 0\) implies that \(A^\top S_k S_k^\top (A x^k - B|x^k| - b) \neq 0\), and hence \(\alpha_k\) is well-defined.
	We are now ready to present the Polyak step-size variant of RIM for solving the GAVE~\eqref{gave}, as described in Algorithm~\ref{alg:ARABK}.

	\begin{algorithm}[htpb]
		\caption{RIM with Polyak step-size (RIM-PS) for GAVE \label{alg:ARABK}}
		\begin{algorithmic}
			\Require
			$A, B \in \mathbb{R}^{m \times n},\; b \in \mathbb{R}^m$, distribution $\mathcal{D}$, relaxation parameters $\zeta \in (0, 2),\; k = 0$, and initial points $x^0 \in \mathbb{R}^n$.
			\begin{enumerate}
				\item[1:] Randomly select a sampling matrix $S_k \in \mathcal{D}$.
				\item[2:] Compute the step-size $\alpha_k$ using~\eqref{eq:alpha_k} then update
				\[
				x^{k+1} = x^k - \alpha_k A^\top S_k S_k^\top (Ax^k - B|x^k|-b).
				\]
				\item[3:] If a stopping rule is satisfied, stop and go to output. Otherwise, set $k = k+1$ and return to Step 1.
			\end{enumerate}
			
			\Ensure
			The approximate solution $x^k$.
		\end{algorithmic}
	\end{algorithm}
	
	\begin{remark}
		Let \( S \in \mathbb{R}^{m \times \ell} \) be the randomized sketching matrix sampled from the distribution \( \mathcal{D} \). When \( \ell = 1 \), \( S \) degenerates to a vector, so that \( S_k^\top (A x^k - B|x^k| - b) \) becomes a scalar. In this case, \( L_{\text{adap}}^{(k)} \) in \eqref{eq:alpha_k} simplifies to
		\[
		L_{\text{adap}}^{(k)} = \frac{\big( S_k^\top (A x^k - B|x^k| - b) \big)^2}{\| A^\top S_k \|_2^2 \cdot \big( S_k^\top (A x^k - B|x^k| - b) \big)^2}
		= \frac{1}{\| A^\top S_k \|_2^2}.
		\]
		Accordingly, the proposed Polyak step-size reduces to the constant step-size used in the original RIM method~\cite{Xie2025}. However, in the general case \( \ell \geq 2 \), the sketching matrix is no longer a vector, and the above simplification no longer holds. Consequently, \( L_{\text{adap}}^{(k)} \) varies at each iteration, leading to an adaptive step-size scheme that differs significantly from the constant step-size in standard RIM.
	\end{remark}
	
	\subsection{Convergence analysis}
	
	Let $\{S_k\}_{k \ge 0}$ be a sequence of independently and identically distributed (i.i.d.) random variables, with common distribution determined by the sampling strategy of Algorithm~\ref{alg:ARABK}. To describe the flow of information throughout the iterative process, we introduce the history $\mathcal{B}_k := (S_0, S_1, \dots, S_{k-1})$ for $k\ge 1$. For any random variable $X$, we employ $\mathbb{E}_k[X] := \mathbb{E}[X\mid\mathcal{B}_k]$ to denote the conditional expectation given the history up to step $k$. We also define
	\[
	\mathcal{Q}_k := \{ S_k \in \mathcal{D} \mid S_k^\top (A x^k - B|x^k| - b) \neq 0 \}.
	\]
	Evidently, the collection $\{\mathcal{Q}_k, \mathcal{Q}_k^c\}$ forms a partition of the sampling space of $\mathcal{D}$. 
	On the event $S_k \in \mathcal{Q}$, we introduce the corresponding conditional expectation $\mathbb{E}_{k, S_k \in \mathcal{Q}}[\cdot] := \mathbb{E}[\cdot \mid \mathcal{B}_k, S_k \in \mathcal{Q}]$.
	
	For the convergence analysis, we define
	\[ 
	\Lambda_{\min} := \inf_{S \in \mathcal{D},\, A^\top S \neq 0} \lambda_{\min}\left( \frac{A^\top S S^\top A}{\|A^\top S\|_2^2} \right).
	\]
	Since the sampling space of $\mathcal{D}$ is finite, we have $\Lambda_{\min}>0$. Moreover, note that $\frac{\|A^\top SS^\top A\|_2}{\|A^\top S\|_2^2}\leq 1$, we have $\Lambda_{\min}\leq 1$. For any $\epsilon>0$, let
	\begin{equation}\label{c}
		c_{\zeta, \epsilon} := 
		\begin{cases} 
			(2 - \zeta)(1 - (1 + \epsilon)(1 - \zeta)), & \text{if } \zeta \in (0, 1) \text{ and } \epsilon\in (0,\frac{\zeta}{1-\zeta}); \\ 
			1,&\text{if }\zeta=1; \\
			(2 - \zeta)(1 + (1 - \epsilon)(\zeta - 1)), & \text{if } \zeta \in (1, 2) \text{ and } \epsilon\in(0,\frac{\zeta}{\zeta-1}), 
		\end{cases}
	\end{equation}
	and
	\begin{equation}\label{d} 
		d_{\zeta, \epsilon} := 
		\begin{cases} 
			(2 - \zeta)\left(1 + (\epsilon^{-1} + 1)(1 - \zeta)\right), & \text{if } \zeta \in (0, 1) \text{ and } \epsilon\in(0,\frac{\zeta}{1-\zeta}); \\ 
			1,&\text{if } \zeta=1; \\
			(2 - \zeta)\left(1 + (\epsilon^{-1} - 1)(\zeta - 1)\right), & \text{if } \zeta \in(1, 2)\text{ and }\epsilon\in(0,\frac{\zeta}{\zeta-1}).
		\end{cases}
	\end{equation}
	Then we have the following result for the convergence of Algorithm~\ref{alg:ARABK}.
	
	\begin{theorem}\label{thm:convergence}
		Assume that the distribution $\mathcal{D}$ satisfies Assumption~\ref{ass:1} and GAVE~\eqref{gave} is solvable with nonempty solution set $\mathcal{X}^*$. Let $H := \mathbb{E}\left[ \frac{S S^\top}{\|A^\top S\|_2^2} \right]$ and $\{x^k\}_{k\ge0}$ be generated by Algorithm~\ref{alg:ARABK}. If $\zeta\in(0,1)$ and $\epsilon\in(0,\frac{\zeta}{1-\zeta})$, or $\zeta=1$, or $\zeta\in(1,2)$ and $\epsilon\in(0,\frac{\zeta}{\zeta-1})$, then
		\[
		\mathbb{E}\left[\operatorname{dist}_{\mathcal{X}^*}^2(x^{k+1})\right]
		\le
		\left(1 - c_{\zeta,\epsilon}\,\sigma_{\min}^2(H^{\frac{1}{2}}A) + \frac{d_{\zeta,\epsilon}}{\Lambda_{\min}}\|H^{\frac{1}{2}}B\|_2^2\right)
		\mathbb{E}\left[\operatorname{dist}_{\mathcal{X}^*}^2(x^k)\right],
		\]
		where $c_{\zeta,\epsilon}$ and $d_{\zeta,\epsilon}$ are defined in \eqref{c} and \eqref{d}, respectively.
	\end{theorem}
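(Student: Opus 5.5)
The plan is a one-step descent estimate, conditional on $\mathcal{B}_k$, for $\|x^{k+1}-y\|_2^2$, where $y$ is a projection of $x^k$ onto $\mathcal{X}^*$. A projection exists because $\mathcal{X}^*$ is closed and, by assumption, nonempty. I then take expectations over $S_k$ and finally use $\operatorname{dist}^2_{\mathcal{X}^*}(x^{k+1})\le\|x^{k+1}-y\|_2^2$ and the tower property.

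Write $r_k=Ax^k-B|x^k|-b$, $u=S_k^\top r_k$, $g=A^\top S_k u$ and $w=|x^k|-|y|$, so that $\|w\|_2\le\|x^k-y\|_2$. Since $Ay-B|y|=b$, we have $r_k=A(x^k-y)-Bw$. Setting $p:=S_k^\top A(x^k-y)$ and $q:=S_k^\top Bw$ gives $u=p-q$.

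First consider $S_k\in\mathcal{Q}_k$. Expanding the update with $\alpha_k=(2-\zeta)\|u\|_2^2/\|g\|_2^2$ gives $\alpha_k^2\|g\|_2^2=(2-\zeta)\alpha_k\|u\|_2^2$. It also gives $\langle g,x^k-y\rangle=\langle u,p\rangle=\|u\|_2^2+\langle u,q\rangle$. Hence
\[
\|x^{k+1}-y\|_2^2=\|x^k-y\|_2^2-\zeta\alpha_k\|u\|_2^2-2\alpha_k\langle u,q\rangle .
\]
Substituting $u=p-q$, this becomes
\[
\|x^k-y\|_2^2-\alpha_k\Big(\zeta\|p\|_2^2-(2-\zeta)\|q\|_2^2+2(1-\zeta)\langle p,q\rangle\Big).
\]
The cross term is handled by Young's inequality with parameter $\epsilon$, split by the sign of $1-\zeta$.
\begin{itemize}
\item For $\zeta<1$, use $-2(1-\zeta)\langle p,q\rangle\le(1-\zeta)(\epsilon\|p\|_2^2+\epsilon^{-1}\|q\|_2^2)$. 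The coefficient of $\|p\|_2^2$ is then $1-(1+\epsilon)(1-\zeta)>0$, which is exactly where $\epsilon<\zeta/(1-\zeta)$ is used. The coefficient of $\|q\|_2^2$ is $1+(\epsilon^{-1}+1)(1-\zeta)$.
\item For $\zeta>1$, the analogous bound gives the coefficients $1+(1-\epsilon)(\zeta-1)>0$ and $1+(\epsilon^{-1}-1)(\zeta-1)$.
\item For $\zeta=1$, no cross term appears.
\end{itemize}

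Next I bound the step-size from both sides. From $\|g\|_2\le\|A^\top S_k\|_2\|u\|_2$ we get $\alpha_k\ge(2-\zeta)/\|A^\top S_k\|_2^2$, and I use this lower bound on the $\|p\|_2^2$ term. From $\|A^\top S_ku\|_2^2\ge\Lambda_{\min}\|A^\top S_k\|_2^2\|u\|_2^2$, which uses full column rank of $A^\top S_k$ and the definition of $\Lambda_{\min}$, we get $\alpha_k\le(2-\zeta)/(\Lambda_{\min}\|A^\top S_k\|_2^2)$. I use this upper bound on the $\|q\|_2^2$ term. Together they give
\[
\|x^{k+1}-y\|_2^2\le\|x^k-y\|_2^2-c_{\zeta,\epsilon}\frac{\|S_k^\top A(x^k-y)\|_2^2}{\|A^\top S_k\|_2^2}+\frac{d_{\zeta,\epsilon}}{\Lambda_{\min}}\frac{\|S_k^\top Bw\|_2^2}{\|A^\top S_k\|_2^2}.
\]

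On $\mathcal{Q}_k^c$ we have $x^{k+1}=x^k$ and $p=q$. The same inequality then still holds, because $c_{\zeta,\epsilon}\le d_{\zeta,\epsilon}\le d_{\zeta,\epsilon}/\Lambda_{\min}$ (direct comparison of \eqref{c} and \eqref{d}, together with $\Lambda_{\min}\le1$). So the displayed bound holds for every $S_k$. Taking $\mathbb{E}_k$ and using the definition of $H$ gives a $p$-term equal to $(x^k-y)^\top A^\top HA(x^k-y)$ and a $q$-term equal to $w^\top B^\top HBw\le\|H^{1/2}B\|_2^2\|x^k-y\|_2^2$.

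The main obstacle is the lower bound $(x^k-y)^\top A^\top HA(x^k-y)\ge\sigma_{\min}^2(H^{1/2}A)\|x^k-y\|_2^2$. Here $\sigma_{\min}$ is the smallest nonzero singular value, so this requires $x^k-y\in\operatorname{Range}(A^\top)$, or at least that the component of $x^k-y$ in $\operatorname{Null}(A)$ does not spoil the bound. The first thing I would try is to note that $H$ is positive definite (Lemma~\ref{lem:H}), so $\operatorname{Null}(H^{1/2}A)=\operatorname{Null}(A)$. In the intended regime, where the right-hand factor is less than one, $A$ has full column rank and the bound is immediate. Otherwise I would argue that the iterates move only within $x^0+\operatorname{Range}(A^\top)$ and choose $y$ accordingly. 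Once this step is settled, taking full expectation completes the proof.
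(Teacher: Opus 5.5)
Up to the step you flag, your argument is the paper's proof. You use the same expansion, the same Young split according to the sign of $1-\zeta$, and the same two-sided bounds $\frac{2-\zeta}{\|A^\top S_k\|_2^2}\le\alpha_k\le\frac{2-\zeta}{\Lambda_{\min}\|A^\top S_k\|_2^2}$ on $\mathcal{Q}_k$. Your pointwise check on $\mathcal{Q}_k^c$ via $c_{\zeta,\epsilon}\le d_{\zeta,\epsilon}/\Lambda_{\min}$ is a tidier version of the paper's split-expectation bookkeeping.

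You have correctly located the one step the paper asserts without justification: $(x^k-y)^\top A^\top HA(x^k-y)\ge\sigma_{\min}^2(H^{1/2}A)\|x^k-y\|_2^2$, where $\sigma_{\min}$ is the smallest \emph{nonzero} singular value. However, your proposal leaves this step open, and both suggested repairs fail.
\begin{enumerate}
\item A factor below one does not force $A$ to have full column rank; the example below is a counterexample.
\item Choosing $y\in\mathcal{X}^*\cap(x^0+\operatorname{Range}(A^\top))$ does make $x^k-y\in\operatorname{Range}(A^\top)$. But then $y$ is no longer the projection of $x^k$ onto $\mathcal{X}^*$, so the recursion closes only for the distance to that smaller set. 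That is a different statement, and it needs the intersection to be nonempty, which can fail.
\end{enumerate}

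In fact no argument can close this step, because the stated inequality is false for rank-deficient $A$. Take $m=n=2$, $\mathcal{D}$ uniform on $\{e_1,e_2\}$, $\zeta=1$, $0<\delta<1$, and
\[
A=\begin{pmatrix}1&0\\1&0\end{pmatrix},\qquad B=\delta\begin{pmatrix}0&1\\0&-1\end{pmatrix},\qquad b=\begin{pmatrix}1-\delta\\1+\delta\end{pmatrix}.
\]
Then $\mathcal{X}^*=\{(1,1)^\top,(1,-1)^\top\}$ and Assumption~\ref{ass:1} holds. One computes $H=\tfrac12 I$, $\sigma_{\min}(H^{1/2}A)=1$, $\|H^{1/2}B\|_2^2=\delta^2$ and $\Lambda_{\min}=1$. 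Here $\Lambda_{\min}$ is read through $S^\top AA^\top S$, as your bound on $\alpha_k$ requires; the literal $n\times n$ matrix $A^\top SS^\top A$ gives $\Lambda_{\min}=0$ whenever $\ell<n$. So the factor equals $\delta^2<1$.

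Every update lies in $\operatorname{Range}(A^\top)=\operatorname{span}\{e_1\}$. Starting from $x^0=0$, one step gives $x^1=(1\mp\delta,0)^\top$. Hence
\[
\mathbb{E}[\operatorname{dist}^2_{\mathcal{X}^*}(x^1)]=1+\delta^2>2\delta^2=\delta^2\operatorname{dist}^2_{\mathcal{X}^*}(x^0),
\]
and indeed $\operatorname{dist}_{\mathcal{X}^*}(x^k)\ge1$ for all $k$. With $v=x^0-y=(-1,\mp1)^\top$, the failing step reads $v^\top A^\top HAv=1<2=\sigma_{\min}^2(H^{1/2}A)\|v\|_2^2$.

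The missing ingredient is therefore an extra hypothesis: $A$ has full column rank. The paper's proof uses this tacitly, and it holds in all of its experiments, where $m\ge n$. Under this hypothesis, $H\succ0$ gives $\sigma_{\min}^2(H^{1/2}A)=\lambda_{\min}(A^\top HA)$, the bound holds for every vector, and the rest of your argument is complete.
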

	\begin{proof}
		For any $x^k$, we use $x_k^*$ to denote the projection of $x^k$ onto the solution set $\mathcal{X}^*$, i.e.,
		$
		\|x^k - x_k^*\|_2 = \operatorname{dist}_{\mathcal{X}^*}(x^k).
		$
		When $S_k^\top \left( Ax^k - B|x^k|-b \right) \neq 0$, we have
		\begin{align*}
			\|x^{k+1} - x_k^*\|_2^2 &= \|x^k - \alpha_k A^\top S_k S_k^\top (Ax^k - B|x^k|-b) - x_k^*\|_2^2 \\
			&= \|x^k - x_k^*\|_2^2 + \alpha_k^2 \|A^\top S_k S_k^\top (Ax^k - B|x^k|-b)\|_2^2 \\
			&\quad - 2\alpha_k \langle S_k^\top A(x^k - x_k^*), S_k^\top (Ax^k - B|x^k|-b) \rangle \\
			&= \|x^k - x_k^*\|_2^2 + (2 - \zeta) \alpha_k \|S_k^\top (Ax^k - B|x^k|-b)\|_2^2 \\
			&\quad - 2\alpha_k \langle S_k^\top A(x^k - x_k^*), S_k^\top (Ax^k -B|x^k|-b) \rangle \\
			&= \|x^k - x_k^*\|_2^2 + (2 - \zeta) \alpha_k \|S_k^\top (Ax^k - B|x^k|-b)\|_2^2 \\
			&\quad + \alpha_k \|S_k^\top B (|x^k|-|x_k^*|)\|_2^2 - \alpha_k\|S_k^\top A(x^k - x_k^*)\|_2^2 \\
			&\quad - \alpha_k\|S_k^\top (Ax^k -B|x^k|- b)\|_2^2 \\
			&= \|x^k - x_k^*\|_2^2 - \alpha_k \|S_k^\top A(x^k - x_k^*)\|_2^2 + \alpha_k \|S_k^\top B(|x^k|-|x_k^*|)\|_2^2 \\
			&\quad - (\zeta - 1) \alpha_k \|S_k^\top (Ax^k -B|x^k|-b )\|_2^2,
		\end{align*}
		where the third equality follows from the definition of $\alpha_k$ and the forth equality is due to $-2\langle a, b \rangle=\|a-b\|_2^2-\|a\|_2^2-\|b\|_2^2$. For any $\epsilon>0$, we can get
		\begin{align*}
			\|S_k^\top (Ax^k -B|x^k|-b )\|_2^2 &= \|S_k^\top A(x^k-x_k^*)+S_k^\top (Ax_k^* -B|x^k|-b )\|_2^2 \\
			&= \|S_k^\top A(x^k - x_k^*) +  S_k^\top B(|x_k^*|-|x^k|)\|_2^2 \\
			&\leq (1 + \epsilon) \|S_k^\top A(x^k - x_k^*) \|_2^2  + (\epsilon^{-1} + 1) \|S_k^\top B(|x_k^*| - |x^k|) \|_2^2,
		\end{align*}
		where the inequality is due to $2\langle a, b \rangle =2\langle \sqrt{\epsilon}a, \epsilon^{-1/2}b \rangle \leq \epsilon \|a\|_2^2+\epsilon^{-1}\|b\|_2^2$.
		And
		\begin{align*}
			\|S_k^\top (Ax^k -B|x^k|-b )\|_2^2 \geq (1 - \epsilon)\|S_k^\top A(x^k -x_k^*)\|_2^2 - (\epsilon^{-1} - 1)\|S_k^\top B(|x_k^*|-|x^k|)\|_2^2.
		\end{align*}
		where the inequality is due to $2\langle a, b \rangle =2\langle \sqrt{\epsilon}a, \epsilon^{-1/2}b \rangle \geq -\epsilon \|a\|_2^2-\epsilon^{-1}\|b\|_2^2$.
		
		For the case that \(\zeta \in (0,1]\), we have
		\begin{align*}
			\|x^{k+1} - x_k^*\|_2^2 &\leq \|x^k - x_k^*\|_2^2 - (1 - (1 + \epsilon)(1 - \zeta))\alpha_k \left\|S_k^\top A(x^k - x_k^*)\right\|_2^2 \\
			&\quad + (1 + (\epsilon^{-1} + 1)(1 - \zeta))\alpha_k \|S_k^\top B(|x^k|-|x_k^*|)\|_2^2 \\
			&= \|x^k - x_k^*\|_2^2 - c_{\zeta,\epsilon}L_{\text{adap}}^{(k)}\|S_k^\top A(x^k - x_k^*)\|_2^2  + d_{\zeta,\epsilon}L_{\text{adap}}^{(k)}\|S_k^\top B(|x^k|-|x_k^*|)\|_2^2,
		\end{align*}
		where \(c_{\zeta,\epsilon} = (2-\zeta)(1 - (1 + \epsilon)(1 - \zeta))\) and \(d_{\zeta,\epsilon} = (2-\zeta)(1 + (\epsilon^{-1} + 1)(1 - \zeta))\).
		
		For the case that \(\zeta \in (1,2)\), we have
		\begin{align*}
			\|x^{k+1} - x_k^*\|_2^2 &\leq \|x^k -x_k^*\|_2^2 - (1 + (1-\epsilon)(\zeta-1)) \alpha_k \|S_k^\top A(x^k -x_k^*)\|_2^2 \\
			&\quad + (1 + (\epsilon^{-1}-1)(\zeta-1)) \alpha_k \|S_k^\top B(|x^k|-|x_k^*|)\|_2^2 \\
			&= \|x^k - x_k^*\|_2^2 - c_{\zeta,\epsilon} L_{\text{adap}}^{(k)} \|S_k^\top A(x^k - x_k^* )\|_2^2 + d_{\zeta,\epsilon} L_{\text{adap}}^{(k)} \|S_k^\top B(|x^k|-|x_k^*|) \|_2^2.
		\end{align*}
		where $c_{\zeta,\epsilon} = (2-\zeta)(1 + (1 - \epsilon)(\zeta-1))$ and $d_{\zeta,\epsilon} = (2-\zeta)(1 + (\epsilon^{-1} -1)(\zeta-1))$.

		Thus, for any $\zeta \in (0, 2)$, it holds that
		\begin{align*}
			&\mathbb{E}_{k, S_k \in \Omega} \left[ \|x^{k+1} - x_k^*\|_2^2 \right] \\
			&= \mathbb{P}(S_k \in \mathcal{Q}_k^c) \, \mathbb{E}_{k, S_k \in \mathcal{Q}_k^c} \left[ \|x^{k+1} - x_k^*\|_2^2 \right] 
			+ \mathbb{P}(S_k \in \mathcal{Q}_k) \, \mathbb{E}_{k, S_k \in \mathcal{Q}_k} \left[ \|x^{k+1} - x_k^*\|_2^2 \right] \\
			&\leq \mathbb{P}(S_k \in \mathcal{Q}_k^c) \, \mathbb{E}_{k, S_k \in \mathcal{Q}_k^c} \left[ \|x^k - x_k^*\|_2^2 \right] 
			+ \mathbb{P}(S_k \in \mathcal{Q}_k) \, \mathbb{E}_{k, S_k \in \mathcal{Q}_k} \left[ \|x^k - x_k^*\|_2^2 \right] \\
			&\quad - c_{\zeta, \epsilon} \, \mathbb{P}(S_k \in \mathcal{Q}_k) \, \mathbb{E}_{k, S_k \in \mathcal{Q}_k} 
			\left[ L^{(k)}_{\text{adap}} \|S_k^\top A(x^k - x_k^*)\|_2^2 \right] \\
			&\quad + d_{\zeta, \epsilon} \, \mathbb{P}(S_k \in \mathcal{Q}_k) \, \mathbb{E}_{k, S_k \in \mathcal{Q}_k} 
			\left[ L^{(k)}_{\text{adap}} \|S_k^\top B(|x^k|-|x_k^*|) \|_2^2 \right] \\
			&= \| x^k - x_k^* \|_2^2 
			- c_{\zeta,\epsilon} \underbrace{ P(S_k \in Q_k) \; \mathbb{E}_{k, S_k\in Q_k} \left[ L_{\mathrm{adap}}^{(k)} \| S_k^\top A(x^k - x_k^*) \|_2^2 \right]}_{\text{(a)}} \\
			&\quad + d_{\zeta,\epsilon} \underbrace{ P(S_k \in Q_k) \; \mathbb{E}_{k, S_k\in Q_k} \left[ L_{\mathrm{adap}}^{(k)} \| S_k^\top B(|x^k| - |x_k^*|) \|_2^2 \right] }_{\text{(b)}}
		\end{align*}
		
		Next, we analyze the two expressions $(a)$ and $(b)$, respectively. Since
		
		\[
		L_{\text{adap}}^{(k)} = \frac{\left\| S_k^\top \left( A x^k- B|x^k|-b \right) \right\|_2^2}{\left\| A^\top S_k S_k^\top \left( A x^k - B|x^k|-b \right) \right\|_2^2} \geq \frac{1}{\|A^\top S_k\|_2^2},
		\]
		we can establish a lower bound for the first expression as
		\begin{align*}
			(a)&\geq P(S_k \in \mathcal{Q}_k) \; \mathbb{E}_{k, S_k \in \mathcal{Q}_k} \left[ \frac{\left\| S_k^\top A(x^k - x_k^*) \right\|_2^2}{\|A^\top S_k\|_2^2} \right] \\
			&= \mathbb{E}_{k, S_k \in \Omega} \left[ \frac{\left\| S_k^\top A(x^k - x_k^*) \right\|_2^2}{\|A^\top S_k\|_2^2} \right] - P(S_k \in \mathcal{Q}_k^c) \; \mathbb{E}_{k, S_k \in \mathcal{Q}_k^c} \left[ \frac{\left\| S_k^\top A(x^k - x_k^*) \right\|_2^2}{\|A^\top S_k\|_2^2} \right] \\
			&\geq \sigma_{\min}^2(H^{\frac{1}{2}} A) \|x^k - x_k^*\|_2^2 - P(S_k \in \mathcal{Q}_k^c) \; \mathbb{E}_{k, S_k \in \mathcal{Q}_k^c} \left[ \frac{\left\| S_k^\top A(x^k - x_k^*) \right\|_2^2}{\|A^\top S_k\|_2^2} \right] \\
			&=\sigma_{\min}^2(H^{\frac{1}{2}} A) \|x^k - x_k^*\|_2^2 - P(S_k \in \mathcal{Q}_k^c) \; \mathbb{E}_{k, S_k \in \mathcal{Q}_k^c} \left[ \frac{\left\| S_k^\top B(|x^k| - |x_k^*|) \right\|_2^2}{\|A^\top S_k\|_2^2} \right],
		\end{align*}
		where the last equality is due to $S_k^\top (Ax^k-B|x^k|-b)=0$ for $S_k\in \mathcal{Q}_k^c$.
		
		In addition, since Assumption~\ref{ass:1}(ii), we have
		\begin{align*}
			L_{\text{adap}}^{(k)} &= \frac{\left\| S_k^\top \left(Ax^k-B|x^k|-b \right) \right\|_2^2}{\left\| A^\top S_k S_k^\top \left( A x^k - B|x^k|-b \right) \right\|_2^2} 
			\leq \frac{1}{\lambda_{\min} \left( \frac{A^\top S_k S_k^\top A}{\|A^\top S_k\|_2^2} \right) \|A^\top S_k\|_2^2} \leq \frac{1}{\Lambda_{\min} \|A^\top S_k\|_2^2},
		\end{align*}
		then the second expression can be bounded as
		\[(b) \leq \frac{1}{\Lambda_{\min}} \mathbb{P}(S_k \in Q_k) \mathbb{E}_{k, S_k \in Q_k} \left[ \frac{\|S_k^\top B(|x^k|-|x_k^*|) \|_2^2}{\|A^\top S_k\|_2^2} \right].\]
		By the assumptions of theorem, specifically, if $\zeta \in (0, 1)$ and $\epsilon \in (0, \frac{\zeta}{1-\zeta})$, or $\zeta = 1$, or $\zeta \in (1, 2)$ and $\epsilon \in (0, \frac{\zeta}{1-\zeta})$, it follows from \eqref{c} and \eqref{d} that $c_{\zeta,\epsilon} > 0$ and $d_{\zeta,\epsilon} > 0$.
		\begin{align*}
			\mathbb{E}_{k, S_k \in \Omega} [\|x^{k+1} - x_k^*\|_2^2] &\leq (1-c_{\zeta, \epsilon} \sigma_{\min}^2(H^{\frac{1}{2}} A)) \|x^k - x_k^*\|_2^2 \\
			&\quad + c_{\zeta, \epsilon} \mathbb{P}(S_k \in \mathcal{Q}_k^c) \mathbb{E}_{k, S_k \in \mathcal{Q}_k^c} 
			\left[ \frac{\|S_k^\top B(|x^k|-|x_k^*|) \|_2^2}{\|A^\top S_k\|_2^2} \right] \\
			&\quad + \frac{d_{\zeta, \epsilon}}{\Lambda_{\min}} \mathbb{P}(S_k \in \mathcal{Q}_k) \mathbb{E}_{k, S_k \in \mathcal{Q}_k} 
			\left[ \frac{\|S_k^\top B(|x^k|-|x_k^*|) \|_2^2}{\|A^\top S_k\|_2^2} \right] \\
			&\leq (1-c_{\zeta, \epsilon} \sigma_{\min}^2(H^{\frac{1}{2}} A)) \|x^k - x_k^*\|_2^2 \\
			&+ \frac{d_{\zeta, \epsilon}}{\Lambda_{\min}} \mathbb{E}_{k, S_k \in \Omega} 
			\left[ \frac{\|S_k^\top B(|x^k|-|x_k^*|) \|_2^2}{\|A^\top S_k\|_2^2} \right] \\
			&\leq (1-c_{\zeta, \epsilon}\sigma_{\min}^2(H^{\frac{1}{2}} A)+\frac{d_{\zeta, \epsilon}}{\Lambda_{\min}}\|H^\frac{1}{2}B\|_2^2) \|x^k -x_k^*\|_2^2,
		\end{align*}
		where the second inequality follows from the facts that $c_{\zeta, \epsilon} \leq d_{\zeta, \epsilon}$ and $\Lambda_{\min} \leq 1$. In total,
		\begin{equation}\label{inequality2}
			\mathbb{E}[\|x^{k+1} - x_k^*\|_2^2] 
			\leq (1-c_{\zeta, \epsilon} \sigma_{\min}^2(H^{\frac{1}{2}} A)+\frac{d_{\zeta, \epsilon}}{\Lambda_{\min}} \| H^{\frac{1}{2}} B \|_2^2)\mathbb{E} [\|x^k -x_k^*\|_2^2].
		\end{equation}
		By the definition of projection, $x_{k+1}^*$ is the closest point in $\mathcal{X}^*$ to $x^{k+1}$. Hence,
		\begin{equation}\label{inequality3}
			\operatorname{dist}_{\mathcal{X}^*}^2(x^{k+1})
			= \|x^{k+1} - x_{k+1}^*\|_2^2
			\le \|x^{k+1} - x_k^*\|_2^2.
		\end{equation}
		Combining \eqref{inequality2}, and \eqref{inequality3}, we have
		\[
		\mathbb{E}\left[\operatorname{dist}_{\mathcal{X}^*}^2(x^{k+1})\right]
		\le
		(1 - c_{\zeta,\epsilon} \sigma_{\min}^2(H^{\frac{1}{2}} A) + d_{\zeta,\epsilon} \frac{1}{\Lambda_{\min}} \| H^{\frac{1}{2}} B \|_2^2) \mathbb{E}\left[\operatorname{dist}_{\mathcal{X}^*}^2(x^k)\right].
		\]
		This complete the proof of this theorem.
	\end{proof}
	
	For the above expression to converge, the convergence factor must be less than 1. So we analyze two cases of $\zeta$ for the convergence factor.

		%
	
	\begin{theorem}\label{thm:contraction}
		Let $
		r(\zeta) := 1 - c_{\zeta,\epsilon}\,\sigma_{\min}^2(H^{\frac{1}{2}}A) + \frac{d_{\zeta,\epsilon}}{\Lambda_{\min}}\|H^{\frac{1}{2}}B\|_2^2
		$
		be the convergence factor in Theorem~\ref{thm:convergence}. Then the following statements hold:
		\begin{itemize}
			\item[(i)] If \(\zeta \in (0,1]\), then \(r(\zeta) < 1\) provided that
			\[
			\zeta > \frac{\epsilon\,\Lambda_{\min}\sigma_{\min}^2(H^{\frac{1}{2}}A) + (\epsilon^{-1} + 2)\|H^{\frac{1}{2}}B\|_2^2}
			{(1 + \epsilon)\Lambda_{\min}\sigma_{\min}^2(H^{\frac{1}{2}}A) + (\epsilon^{-1} + 1)\|H^{\frac{1}{2}}B\|_2^2}.
			\]
			
			\item[(ii)] If \(\zeta \in (1,2)\), then \(r(\zeta) < 1\) provided that one of the following conditions holds:
			\begin{enumerate}[label=(\alph*)]
				\item \((1 - \epsilon)\Lambda_{\min}\sigma_{\min}^2(H^{\frac{1}{2}}A) > (\epsilon^{-1} - 1)\|H^{\frac{1}{2}}B\|_2^2\) and
				\[
				\zeta > \frac{(2 - \epsilon^{-1})\|H^{\frac{1}{2}}B\|_2^2 - \epsilon\,\Lambda_{\min}\sigma_{\min}^2(H^{\frac{1}{2}}A)}
				{(1 - \epsilon)\Lambda_{\min}\sigma_{\min}^2(H^{\frac{1}{2}}A) - (\epsilon^{-1} - 1)\|H^{\frac{1}{2}}B\|_2^2};
				\]
				
				\item \((1 - \epsilon)\Lambda_{\min}\sigma_{\min}^2(H^{\frac{1}{2}}A) < (\epsilon^{-1} - 1)\|H^{\frac{1}{2}}B\|_2^2\) and
				\[
				\zeta < \frac{(2 - \epsilon^{-1})\|H^{\frac{1}{2}}B\|_2^2 - \epsilon\,\Lambda_{\min}\sigma_{\min}^2(H^{\frac{1}{2}}A)}
				{(1 - \epsilon)\Lambda_{\min}\sigma_{\min}^2(H^{\frac{1}{2}}A) - (\epsilon^{-1} - 1)\|H^{\frac{1}{2}}B\|_2^2};
				\]
				
				\item \((1 - \epsilon)\Lambda_{\min}\sigma_{\min}^2(H^{\frac{1}{2}}A) = (\epsilon^{-1} - 1)\|H^{\frac{1}{2}}B\|_2^2\) and
				\[
				(2 - \epsilon^{-1})\|H^{\frac{1}{2}}B\|_2^2 < \epsilon\,\Lambda_{\min}\sigma_{\min}^2(H^{\frac{1}{2}}A).
				\]
			\end{enumerate}
		\end{itemize}
	\end{theorem}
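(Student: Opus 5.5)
The plan is to reduce the condition $r(\zeta)<1$ to a linear inequality in $\zeta$ and solve it case by case. Write $a:=\Lambda_{\min}\sigma_{\min}^2(H^{\frac12}A)$ and $\beta:=\|H^{\frac12}B\|_2^2$. Since $\Lambda_{\min}>0$, multiplying by $\Lambda_{\min}$ shows that $r(\zeta)<1$ is equivalent to
\[
c_{\zeta,\epsilon}\,a > d_{\zeta,\epsilon}\,\beta .
\]
Both $c_{\zeta,\epsilon}$ and $d_{\zeta,\epsilon}$ in \eqref{c}--\eqref{d} carry the common factor $(2-\zeta)$, which is positive for $\zeta\in(0,2)$. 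We divide it out. After that, each remaining factor is affine in $\zeta$, so the condition becomes a linear inequality in $\zeta$. The only thing that can go wrong is the sign of the coefficient of $\zeta$ when we divide by it.

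\emph{Case $\zeta\in(0,1]$.} Expand the reduced factors:
\[
1-(1+\epsilon)(1-\zeta)=(1+\epsilon)\zeta-\epsilon,\qquad 1+(\epsilon^{-1}+1)(1-\zeta)=(\epsilon^{-1}+2)-(\epsilon^{-1}+1)\zeta .
\]
At $\zeta=1$ both expressions equal $1$, so they agree with the values $c=d=1$ prescribed in \eqref{c}--\eqref{d}. Hence $\zeta=1$ needs no separate treatment. The inequality becomes
\[
\zeta\bigl[(1+\epsilon)a+(\epsilon^{-1}+1)\beta\bigr] > \epsilon a+(\epsilon^{-1}+2)\beta .
\]
The bracket is strictly positive because $a>0$. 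Dividing by it gives exactly condition (i).

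\emph{Case $\zeta\in(1,2)$.} Similarly,
\[
1+(1-\epsilon)(\zeta-1)=\epsilon+(1-\epsilon)\zeta,\qquad 1+(\epsilon^{-1}-1)(\zeta-1)=(2-\epsilon^{-1})+(\epsilon^{-1}-1)\zeta .
\]
The inequality therefore reads
\[
\zeta\bigl[(1-\epsilon)a-(\epsilon^{-1}-1)\beta\bigr] > (2-\epsilon^{-1})\beta-\epsilon a .
\]
Here the coefficient of $\zeta$ may have either sign, and this sign bookkeeping is the only delicate point.
\begin{itemize}
\item If the coefficient is positive, dividing preserves the inequality and gives (a).
\item If it is negative, dividing reverses the inequality and gives (b).
\item If it is zero, the inequality reduces to $0>(2-\epsilon^{-1})\beta-\epsilon a$, which is (c).
\end{itemize}

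Each derived condition is sufficient, and in fact equivalent, for $r(\zeta)<1$ under the standing ranges of $\epsilon$ from Theorem~\ref{thm:convergence}. This finishes the argument.
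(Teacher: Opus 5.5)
Your proposal is correct and follows the paper's proof essentially step for step: factor the common $(2-\zeta)>0$ out of $c_{\zeta,\epsilon}$ and $d_{\zeta,\epsilon}$, reduce $r(\zeta)<1$ to a linear inequality in $\zeta$, and, for $\zeta\in(1,2)$, split into subcases (a)--(c) according to the sign of the coefficient of $\zeta$. The differences are only cosmetic. You multiply by $\Lambda_{\min}$ at the outset rather than at the end, and you explicitly check two points the paper leaves implicit: that the $\zeta\in(0,1)$ formulas give $c=d=1$ at $\zeta=1$, and that the coefficient of $\zeta$ in case (i) is positive.
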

	
	\begin{proof}
		For the case where \(\zeta \in (0,1]\), we have
		\[
		c_{\zeta,\epsilon} = (2-\zeta)\big(1 - (1+\epsilon)(1-\zeta)\big) = (2-\zeta)\big((1+\epsilon)\zeta - \epsilon\big)
		\]
		and
		\[
		d_{\zeta,\epsilon} = (2-\zeta)\big(1 + (\epsilon^{-1}+1)(1-\zeta)\big) = (2-\zeta)\big((\epsilon^{-1}+2) - (\epsilon^{-1}+1)\zeta\big).
		\]
		Since \(2-\zeta > 0\), the condition \(r(\zeta) < 1\) is equivalent to
		\begin{equation}\label{xie-0721-1}
			c_{\zeta,\epsilon} \sigma_{\min}^2(H^{\frac{1}{2}}A) > d_{\zeta,\epsilon} \frac{1}{\Lambda_{\min}} \| H^{\frac{1}{2}} B \|_2^2,
		\end{equation}
		which, after substituting the expressions for \(c_{\zeta,\epsilon}\) and \(d_{\zeta,\epsilon}\), becomes
		\[
		\big((1+\epsilon)\zeta - \epsilon\big) \sigma_{\min}^2(H^{\frac{1}{2}}A)
		>
		\big((\epsilon^{-1}+2) - (\epsilon^{-1}+1)\zeta\big) \frac{1}{\Lambda_{\min}} \| H^{\frac{1}{2}} B \|_2^2.
		\]
		Rearranging terms yields
		\[
		\left( (1+\epsilon)\sigma_{\min}^2(H^{\frac{1}{2}}A) + (\epsilon^{-1}+1)\frac{1}{\Lambda_{\min}} \| H^{\frac{1}{2}} B \|_2^2 \right) \zeta
		>
		\epsilon \, \sigma_{\min}^2(H^{\frac{1}{2}}A) + (\epsilon^{-1}+2)\frac{1}{\Lambda_{\min}} \| H^{\frac{1}{2}} B \|_2^2.
		\]
		Multiplying both sides by \(\Lambda_{\min} > 0\), we obtain
		\[
		\zeta > \frac{\epsilon\,\Lambda_{\min}\sigma_{\min}^2(H^{\frac{1}{2}}A) + (\epsilon^{-1}+2)\| H^{\frac{1}{2}} B \|_2^2}
		{(1+\epsilon)\Lambda_{\min}\sigma_{\min}^2(H^{\frac{1}{2}}A) + (\epsilon^{-1}+1)\| H^{\frac{1}{2}} B \|_2^2},
		\]
		which is exactly the condition stated in Theorem~\ref{thm:contraction}(i).
		
		For the case where \(\zeta \in (1,2)\), we have
		\[
		c_{\zeta,\epsilon} = (2-\zeta)\left( 1 + (1-\epsilon)(\zeta-1)\right) = (2-\zeta)\left((1-\epsilon)\zeta + \epsilon\right)
		\]
		and
		\[
		d_{\zeta,\epsilon} = (2-\zeta)\left(1 + (\epsilon^{-1}-1)(\zeta-1)\right) = (2-\zeta)\left( (\epsilon^{-1}-1)\zeta + (2-\epsilon^{-1})\right).
		\]
		Since \(2-\zeta > 0\), after substituting the expressions for \(c_{\zeta,\epsilon}\) and \(d_{\zeta,\epsilon}\) into \eqref{xie-0721-1}, the condition \(r(\zeta) < 1\) is equivalent to
		\[
		\left((1-\epsilon)\zeta + \epsilon\right) \sigma_{\min}^2(H^{\frac{1}{2}} A) > \left( (\epsilon^{-1}-1)\zeta + (2-\epsilon^{-1})\right) \frac{1}{\Lambda_{\min}} \| H^{\frac{1}{2}} B \|_2^2.
		\]
		Rearranging terms yields
		\[
		\left( (1-\epsilon)\sigma_{\min}^2(H^{\frac{1}{2}} A) - (\epsilon^{-1}-1)\frac{1}{\Lambda_{\min}} \| H^{\frac{1}{2}} B \|_2^2\right) \zeta > (2-\epsilon^{-1})\frac{1}{\Lambda_{\min}} \| H^{\frac{1}{2}} B \|_2^2 - \epsilon \sigma_{\min}^2(H^{\frac{1}{2}} A).
		\]
		We verify this inequality by examining the following subcases.
		
		\noindent\textbf{Case 2.1.} If 
		$
		(1-\epsilon)\sigma_{\min}^2(H^{\frac{1}{2}} A) - (\epsilon^{-1}-1)\frac{1}{\Lambda_{\min}} \| H^{\frac{1}{2}} B \|_2^2 > 0,
		$
		then
		\[
		\zeta > \frac{(2-\epsilon^{-1}) \| H^{\frac{1}{2}} B \|_2^2 - \epsilon\Lambda_{\min} \sigma_{\min}^2(H^{\frac{1}{2}} A)}
		{(1-\epsilon)\Lambda_{\min}\sigma_{\min}^2(H^{\frac{1}{2}} A) - (\epsilon^{-1}-1) \| H^{\frac{1}{2}} B \|_2^2}.
		\]
		
		\noindent\textbf{Case 2.2.} If 
		$
		(1-\epsilon)\sigma_{\min}^2(H^{\frac{1}{2}} A) - (\epsilon^{-1}-1)\frac{1}{\Lambda_{\min}} \| H^{\frac{1}{2}} B \|_2^2 < 0,
		$
		then
		\[
		\zeta < \frac{(2-\epsilon^{-1}) \| H^{\frac{1}{2}} B \|_2^2 - \epsilon\Lambda_{\min} \sigma_{\min}^2(H^{\frac{1}{2}} A)}
		{(1-\epsilon)\Lambda_{\min}\sigma_{\min}^2(H^{\frac{1}{2}} A) - (\epsilon^{-1}-1) \| H^{\frac{1}{2}} B \|_2^2}.
		\]
		
		\noindent\textbf{Case 2.3.} If 
		$
		(1-\epsilon)\sigma_{\min}^2(H^{\frac{1}{2}} A) - (\epsilon^{-1}-1)\frac{1}{\Lambda_{\min}} \| H^{\frac{1}{2}} B \|_2^2 = 0,
		$
		then the inequality reduces to
		$
		(2-\epsilon^{-1})\frac{1}{\Lambda_{\min}} \| H^{\frac{1}{2}} B \|_2^2 - \epsilon \sigma_{\min}^2(H^{\frac{1}{2}} A) < 0,
		$
		or equivalently,
		\[
		(2-\epsilon^{-1}) \| H^{\frac{1}{2}} B \|_2^2 < \epsilon\,\Lambda_{\min} \sigma_{\min}^2(H^{\frac{1}{2}} A).
		\]
		Combining all the cases analyzed above yields the conditions for the convergence factor \( r(\zeta) \) stated in Theorem~\ref{thm:contraction}.
	\end{proof}

	\section{Numerical experiments}\label{sec:numerical}
	
	In this section, we present numerical experiments to demonstrate the efficiency of the proposed RIM-PS method for solving GAVE \eqref{gave}. We also compare its performance against several state-of-the-art approaches, including SLA \cite{Mangasarian2007}, MAP \cite{Alcantara2023}, and RABK \cite{Xie2025}.
	All experiments are conducted on a Lenovo ThinkPad S2 Yoga laptop equipped with an Intel(R) Core(TM) i5-10210U CPU @ 1.60GHz and 8 GB of RAM, running Windows 11 and MATLAB R2022a.
	
	The coefficient matrices $A$ and $B$ in \eqref{gave} are generated via the compact SVD factorization
	\[
	A = U_A \Sigma_A V_A^T, \qquad B = U_B \Sigma_B V_B^T,
	\]
	where $U_A, U_B \in \mathbb{R}^{m \times r}$ and $V_A, V_B \in \mathbb{R}^{n \times r}$ are column-orthogonal matrices, with $r := \min\{m,n\}$. The diagonal entries of $\Sigma_A$ and $\Sigma_B$ are given by
	\[
	(\Sigma_A)_{i,i} = a_{\min} + \frac{i-1}{r-1}(\kappa_A - 1)a_{\min},
	\qquad
	(\Sigma_B)_{i,i} = \frac{b_{\max}}{\kappa_B} + \frac{i-1}{r-1}\left(1 - \frac{1}{\kappa_B}\right)b_{\max},
	\]
	for $i = 1, \dots, r$, where $a_{\min}, b_{\max} > 0$ and $\kappa_A, \kappa_B \ge 1$ are user-specified parameters.
	This construction ensures that the diagonal entries of $\Sigma_A$ and $\Sigma_B$ range linearly from $a_{\min}$ to $b_{\max}$. In particular,  $\sigma_{\min}(A) > \sigma_{\max}(B)$ can guarantee the unique solvability of GAVE~\cite{Rohn2009}. Using MATLAB notation, we generate each of $U_A, V_A, U_B$, and $V_B$ by setting ${\tt [U,\sim]=\mathrm{qr}(\mathrm{randn}(m,r),0)}$ and ${\tt [V,\sim]=\mathrm{qr}(\mathrm{randn}(n,r),0)}$. In all experiments, we fix $a_{\min}  = 2$ and $b_{\max} = 1$. The solution is generated by $x^* =  {\tt randn(n, 1)}$ and $b$ is computed as $b=Ax^*-B|x^*|$. To evaluate the convergence performance of algorithms, we adopt three criteria: the relative solution error (RSE), the number of iterations and the CPU time. The ``RSE'' is defined as $\text{RSE}=\|x^k-x^*\|_2^2/\|x^*\|_2^2$.
	When RSE reaches the set threshold, the computations are terminated. The number of iterations indicates how many steps are required to achieve convergence, and the CPU time directly indicates the overall computational efficiency.
	
	In our experiments, we consider three sketching strategies: uniform sampling, partition sampling, and Gaussian sampling. The first two strategies are described in Example~\ref{example2} and Example~\ref{example3}, respectively. For Gaussian sampling, the sketching matrix \(S \in \mathbb{R}^{m \times \ell}\) has i.i.d. entries drawn from the standard normal distribution \(\mathcal{N}(0,1)\). We note that Gaussian sampling requires multiplying the full matrix by a dense Gaussian matrix, which is computationally far more expensive than the other two strategies, as the latter involve only the selected blocks in matrix products. The partition sampling strategy coincides with that used in the randomized average block Kaczmarz (RABK) method~\cite{Xie2025,Du2020,Necoara2019,Moorman2021}. Accordingly, we denote the three variants of RIM-PS as RABK-PS (partition), RIMUS-PS (uniform), and RIMGS-PS (Gaussian). All numerical results are averaged over 20 independent runs to mitigate the effects of randomness, and we fix \(\zeta = 1\) throughout the experiments.
	%
	
	\subsection{Comparison of different sampling strategies and block sizes}
	\label{sec:comparison_sketches}
	
	In this subsection, we first compare the computational performance of RABK-PS, RIMUS-PS, and RIMGS-PS. Figure~\ref{fig:sketch_comparison} (a)-(c) depicts the relationship between RSE and CPU time for the three adaptive methods under three different combinations of $\kappa_A$ and $\kappa_B$. In each subfigure, the bold line illustrates the median, the lightly shaded area signifies the range from the minimum to the maximum, and the darker shaded area indicates the data lying between the 25-th and 75-th quantiles.
	
	\begin{figure}[htbp]
		\centering
		\subfigure[$\kappa_A=2$, $\kappa_B=1$]{%
			\includegraphics[width=0.32\textwidth]{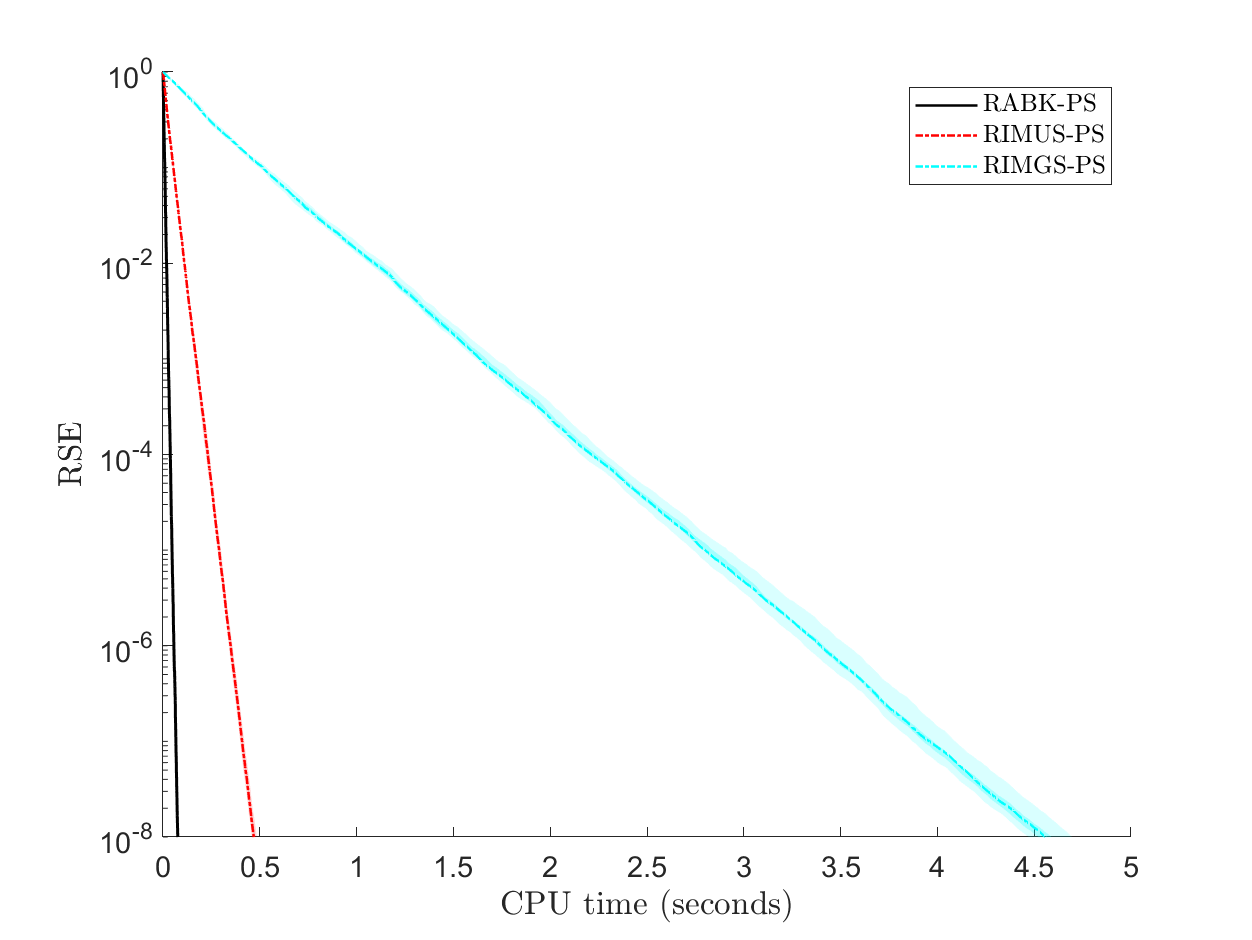}%
			\label{fig:conv1}%
		}%
		\hfill
		\subfigure[$\kappa_A=2$, $\kappa_B=10$]{%
			\includegraphics[width=0.32\textwidth]{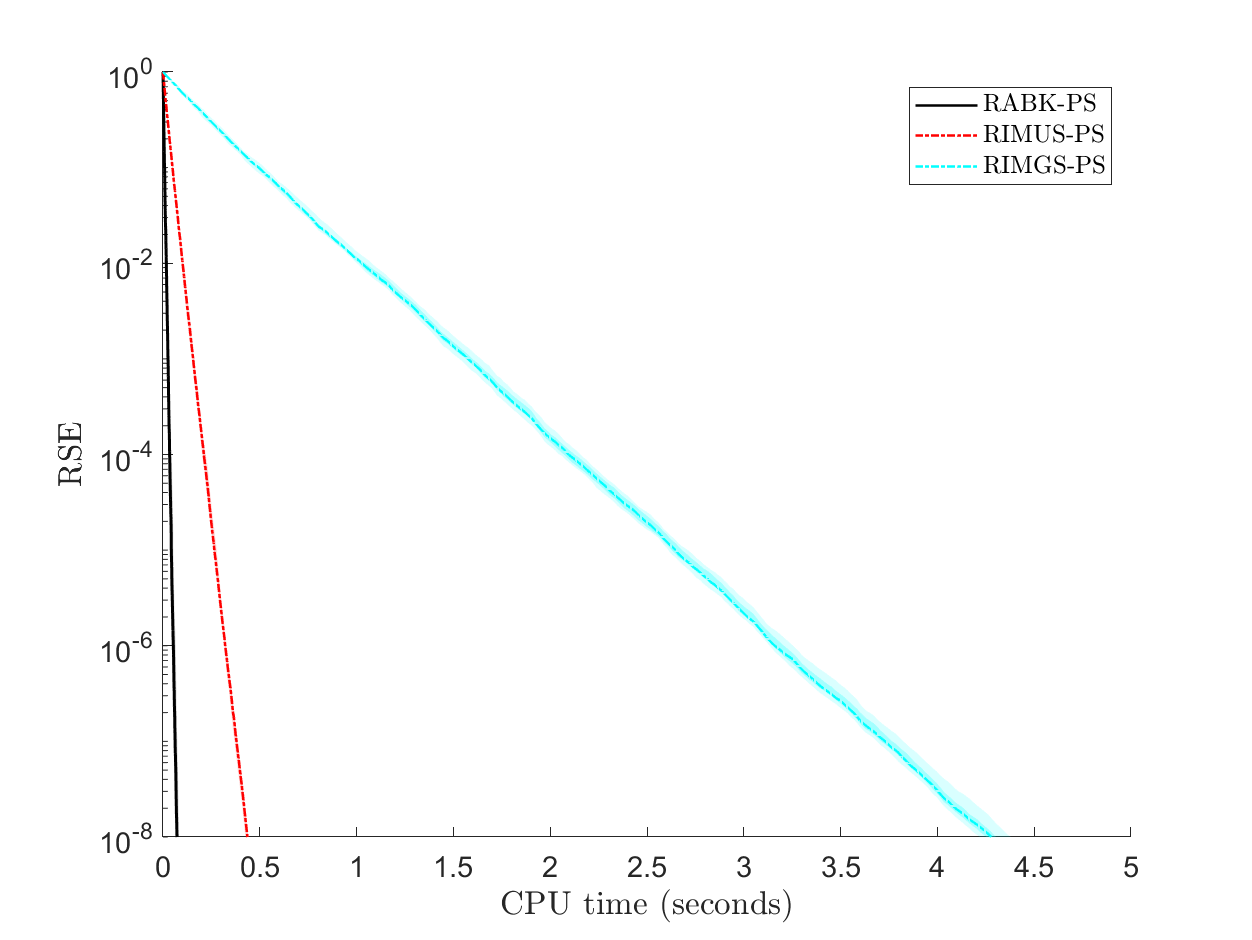}%
			\label{fig:conv2}%
		}%
		\hfill
		\subfigure[$\kappa_A=10$, $\kappa_B=1$]{%
			\includegraphics[width=0.32\textwidth]{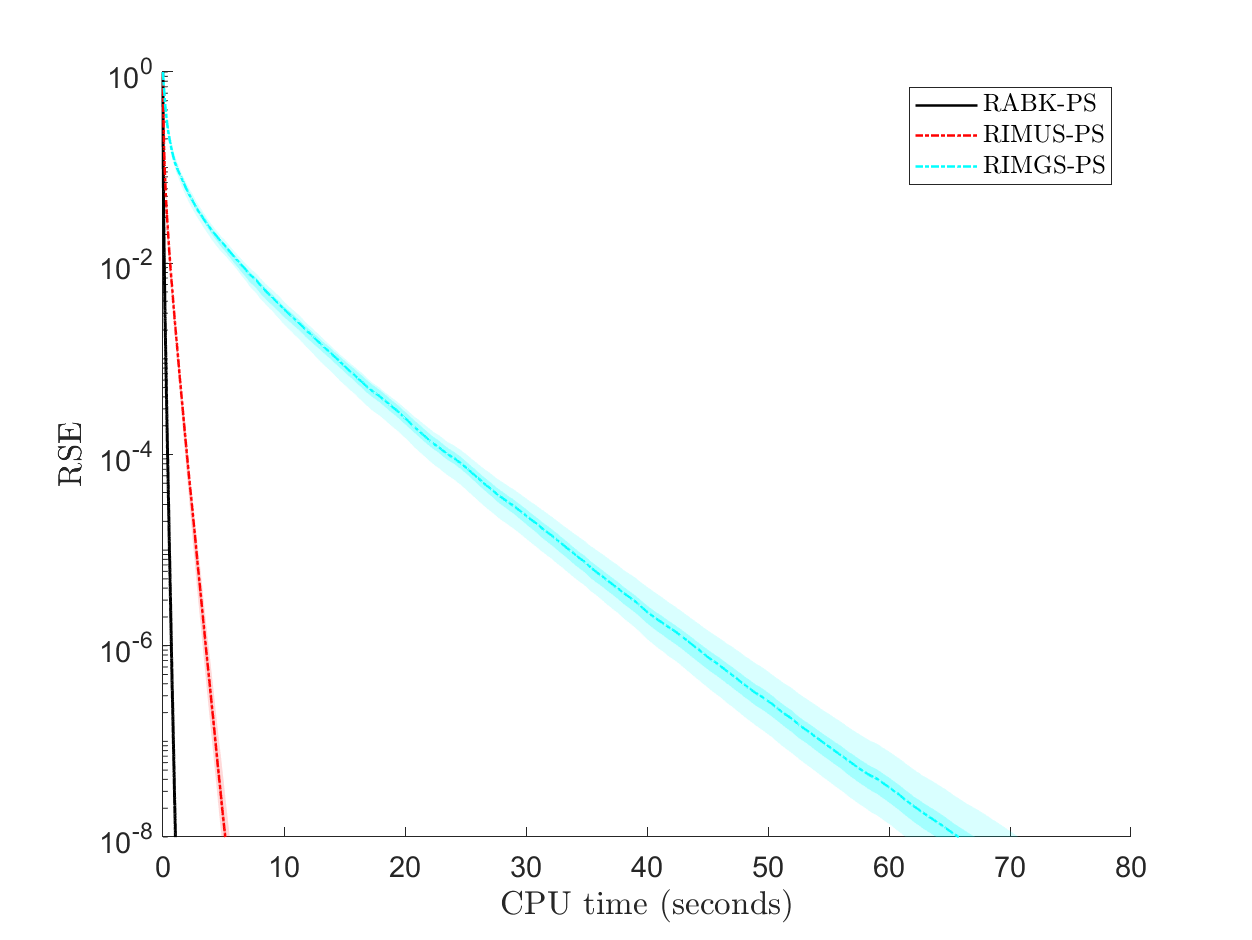}%
			\label{fig:conv3}%
		}
		\caption{Comparison of RABK-PS, RIMUS-PS, and RIMGS-PS. We set $m=2000$, $n=1000$ and $\ell=100$. Figures (a)-(c) depict the evolution of RSE with respect to CPU time under three different conditions.}
		\label{fig:sketch_comparison}
	\end{figure}
	
	It can be observed  in Figure~\ref{fig:sketch_comparison} that  RABK-PS requires the least CPU time among the three methods across all prescribed accuracies, followed by RIMUS-PS, while RIMGS-PS is the most time-consuming. The CPU time of all methods increases with the growth of $\kappa_A$, whereas $\kappa_B$ exhibits only a marginal effect on the computational cost. Given its superior performance, we adopt RABK-PS as the representative method in the subsequent experiments to investigate the influence of the block size.
	
	To assess the impact of the block size $\ell$, we vary it from $1$ to $512$ (powers of two), with $m=512$ and $n=128, 256, 512$. For each setting, we record the iteration count $k$ and report both the CPU time and the equivalent number of full passes, $k \cdot (\ell / m)$, to enable fair comparisons across different block sizes~\cite{Xie2025}.
	Figure~\ref{fig:block_size_influence} shows the number of full iterations and the CPU time as functions of \(\log_{2}(\ell)\). In all cases, the CPU time decreases sharply when moving from the single-row variant (\(\ell=1\)) to a moderately sized block. In contrast to the CPU time, the number of full iterations \(k \cdot \ell / m\) grows monotonically with \(\ell\) for nearly all tested problem sizes and condition numbers. This reflects a fundamental trade-off: increasing the block size reduces the number of iterations \(k\) and improves cache utilization and parallelization, but also raises the computational cost per iteration. The CPU time reaches its minimum when these two opposing effects are balanced, which explains the eventual increase in CPU time for very large block sizes.

	\begin{figure}[htbp]
		\centering
		\subfigure[$\kappa_A=2$, $\kappa_B=10$]{%
			\includegraphics[width=0.32\textwidth]{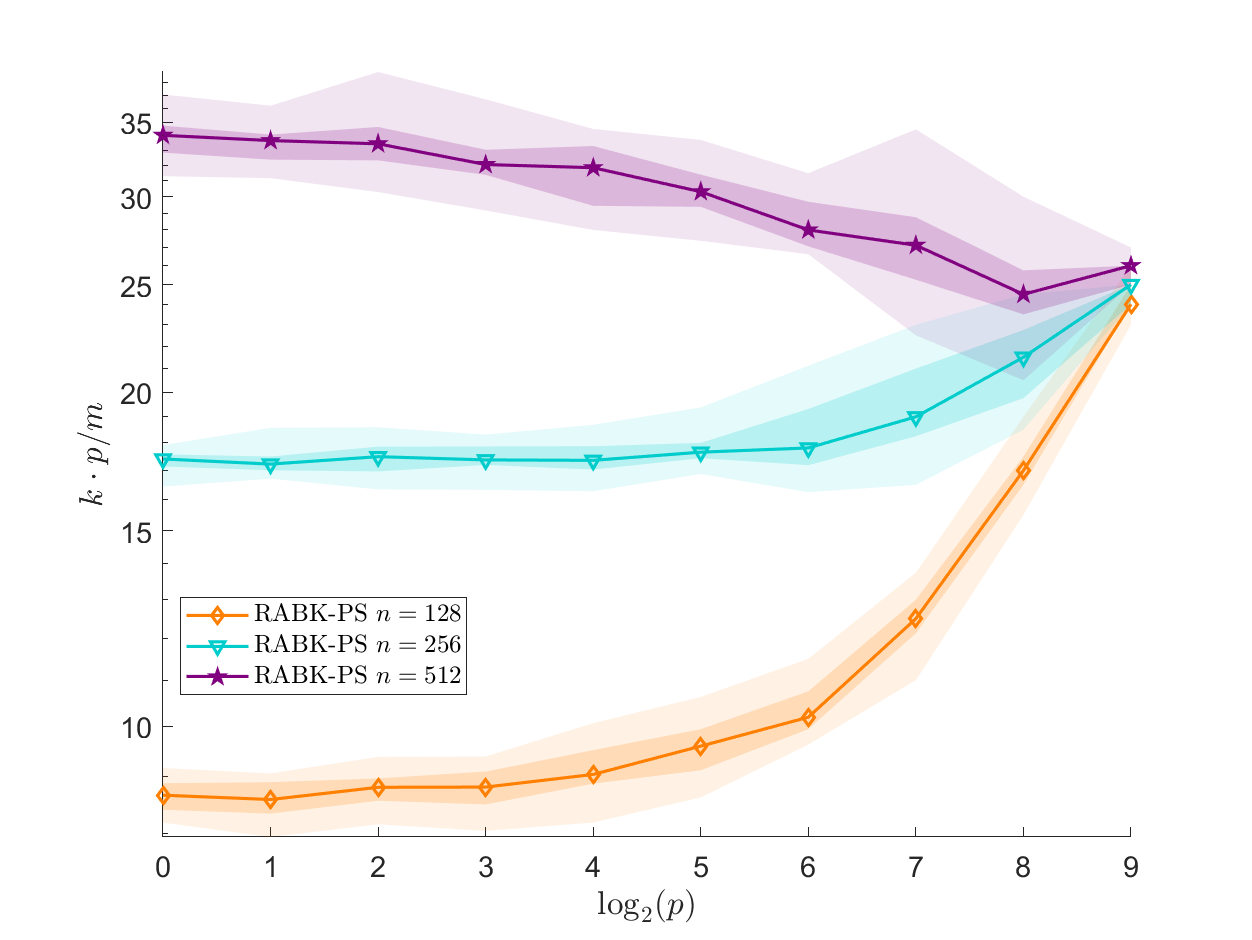}%
		}%
		\hfill
		\subfigure[$\kappa_A=10$, $\kappa_B=10$]{%
			\includegraphics[width=0.32\textwidth]{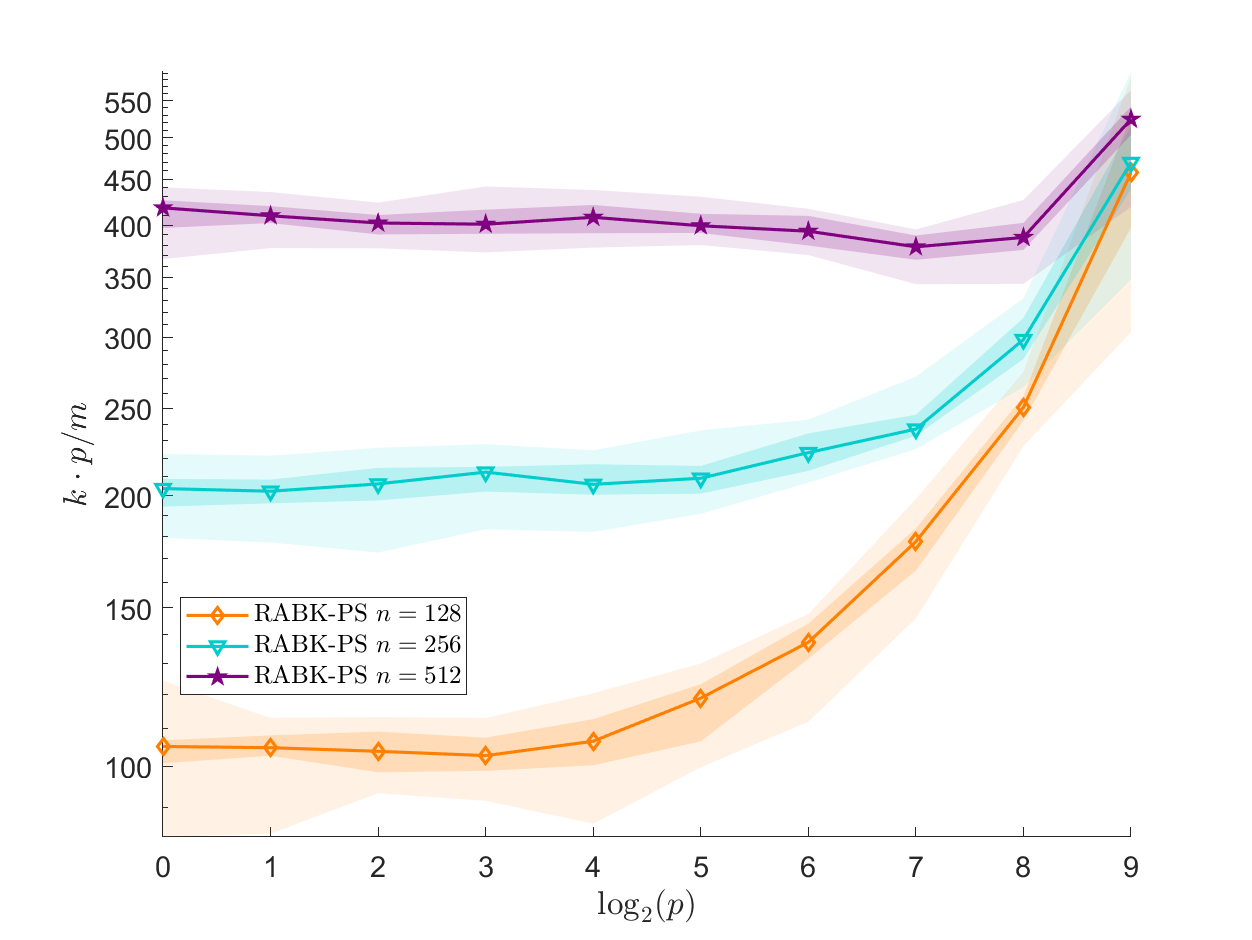}%
		}%
		\hfill
		\subfigure[$\kappa_A=10$, $\kappa_B=2$]{%
			\includegraphics[width=0.32\textwidth]{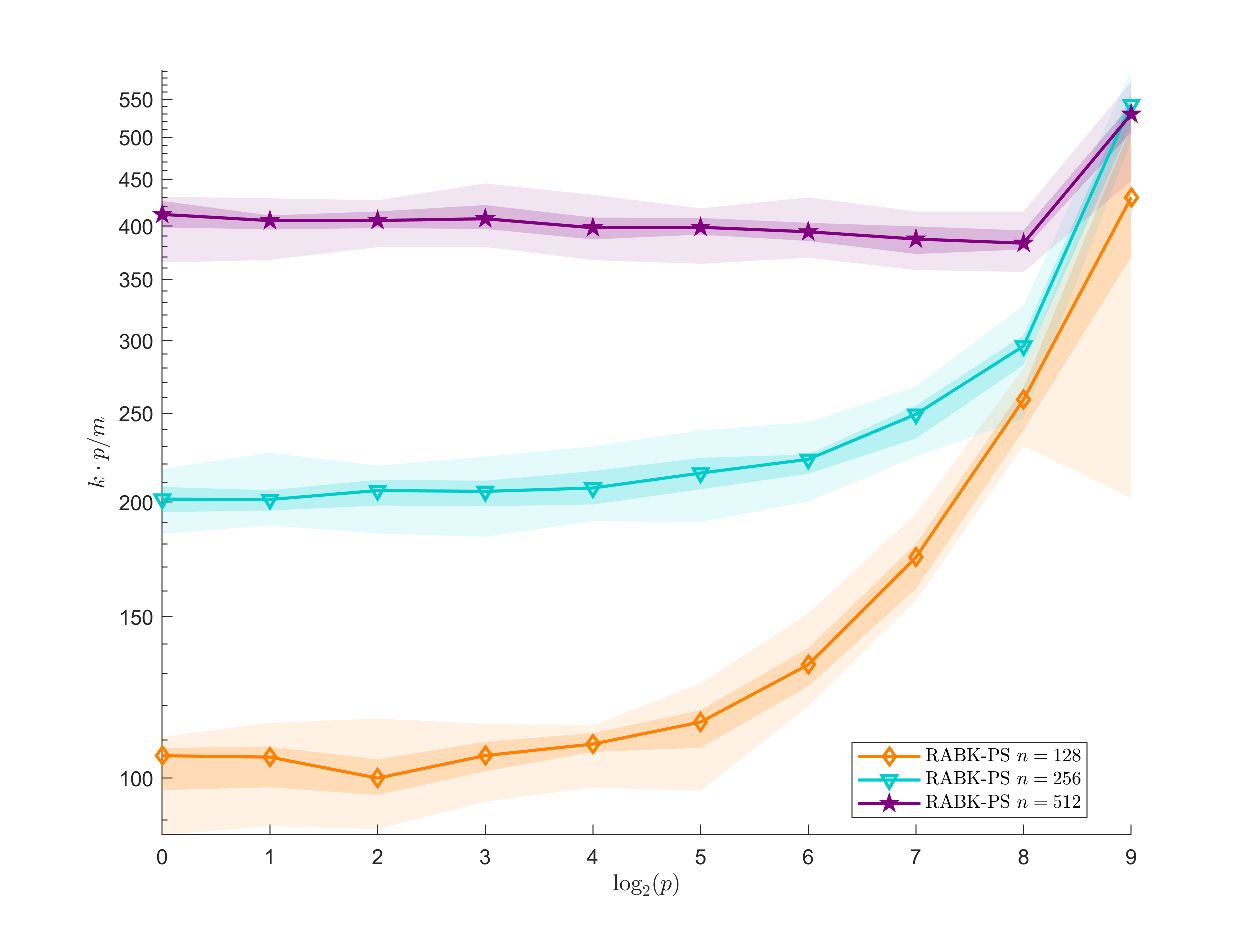}%
		}%
		
		\vspace{0.5em}
		
		\subfigure[$\kappa_A=2$, $\kappa_B=10$]{%
			\includegraphics[width=0.32\textwidth]{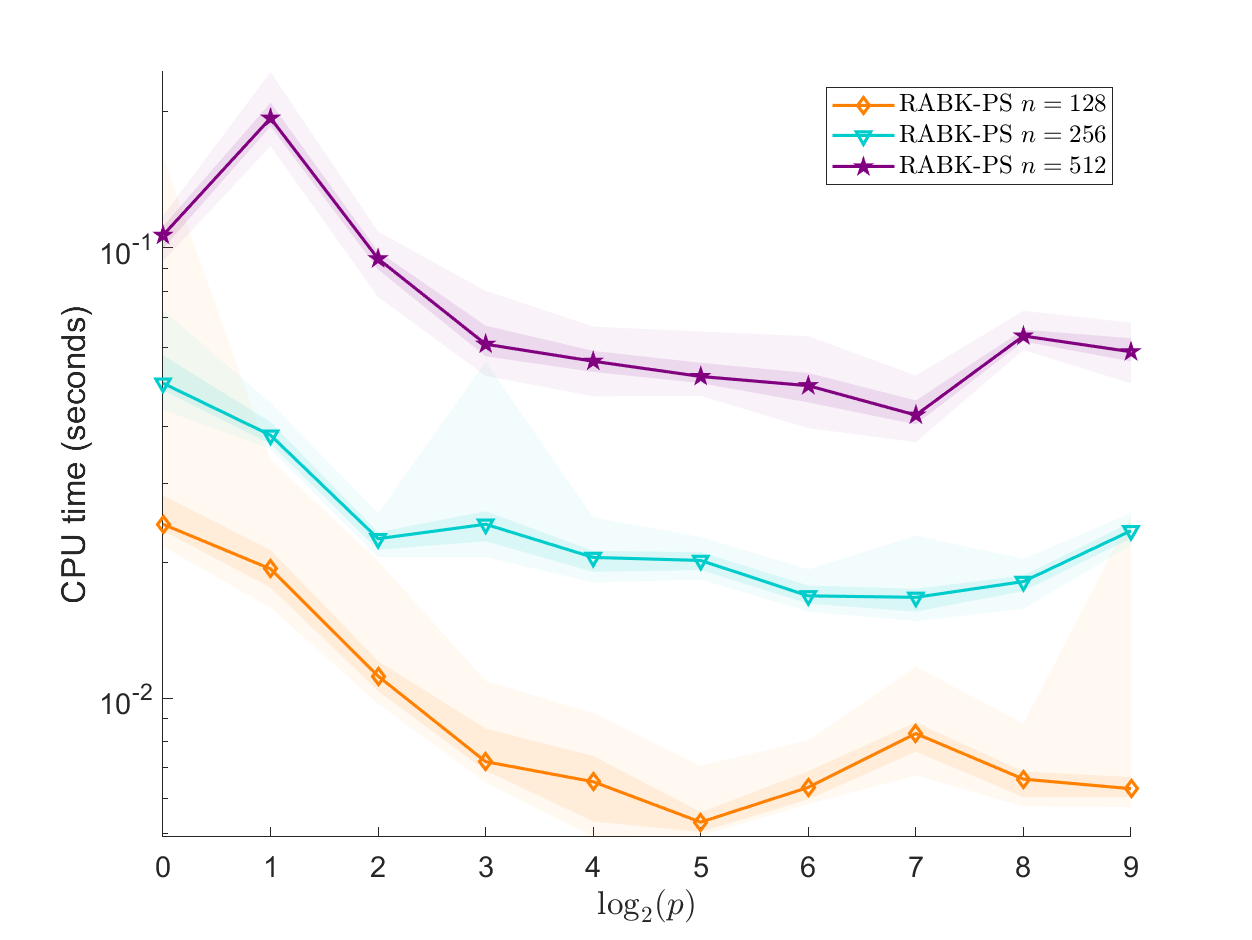}%
		}%
		\hfill
		\subfigure[$\kappa_A=10$, $\kappa_B=10$]{%
			\includegraphics[width=0.32\textwidth]{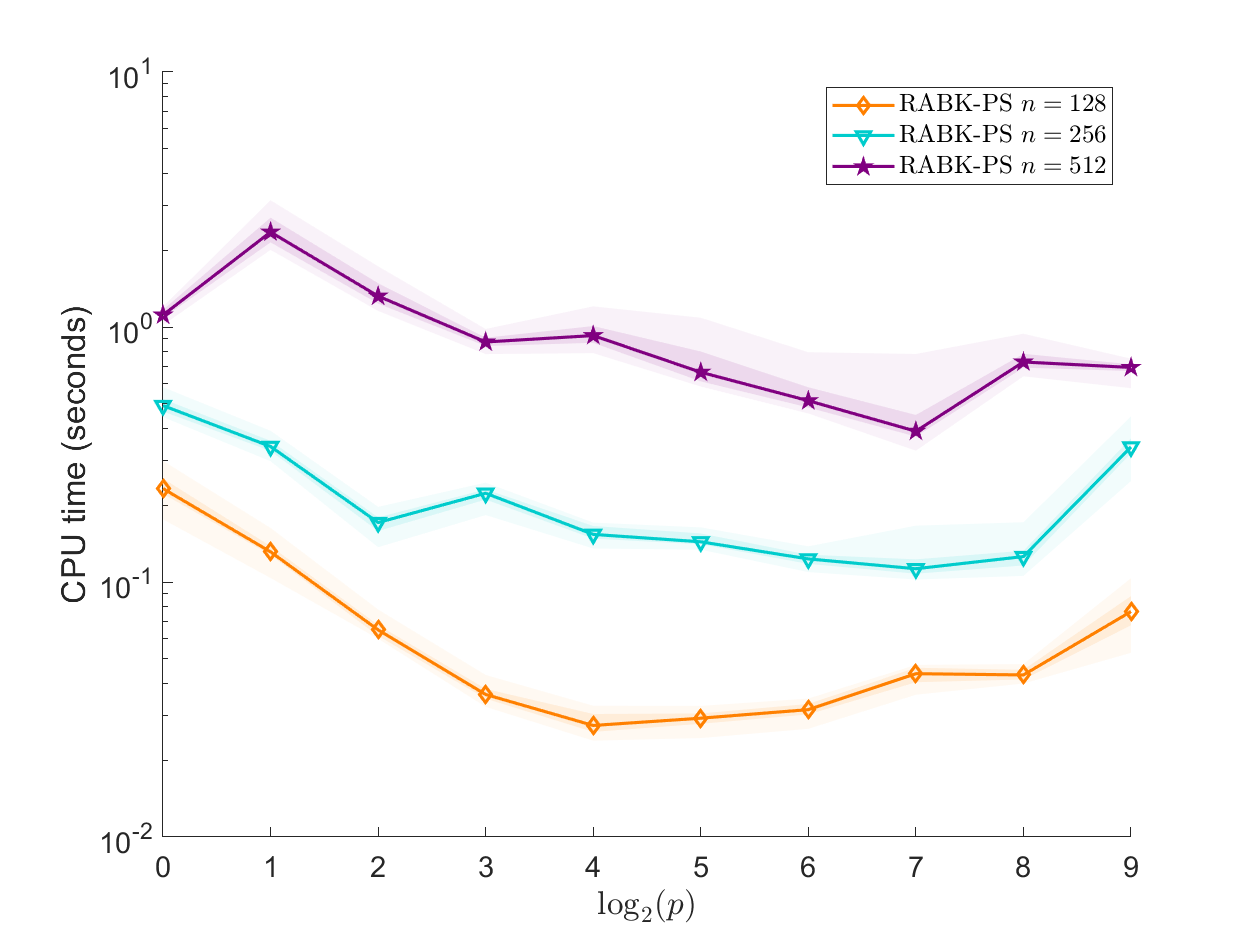}%
		}%
		\hfill
		\subfigure[$\kappa_A=10$, $\kappa_B=2$]{%
			\includegraphics[width=0.32\textwidth]{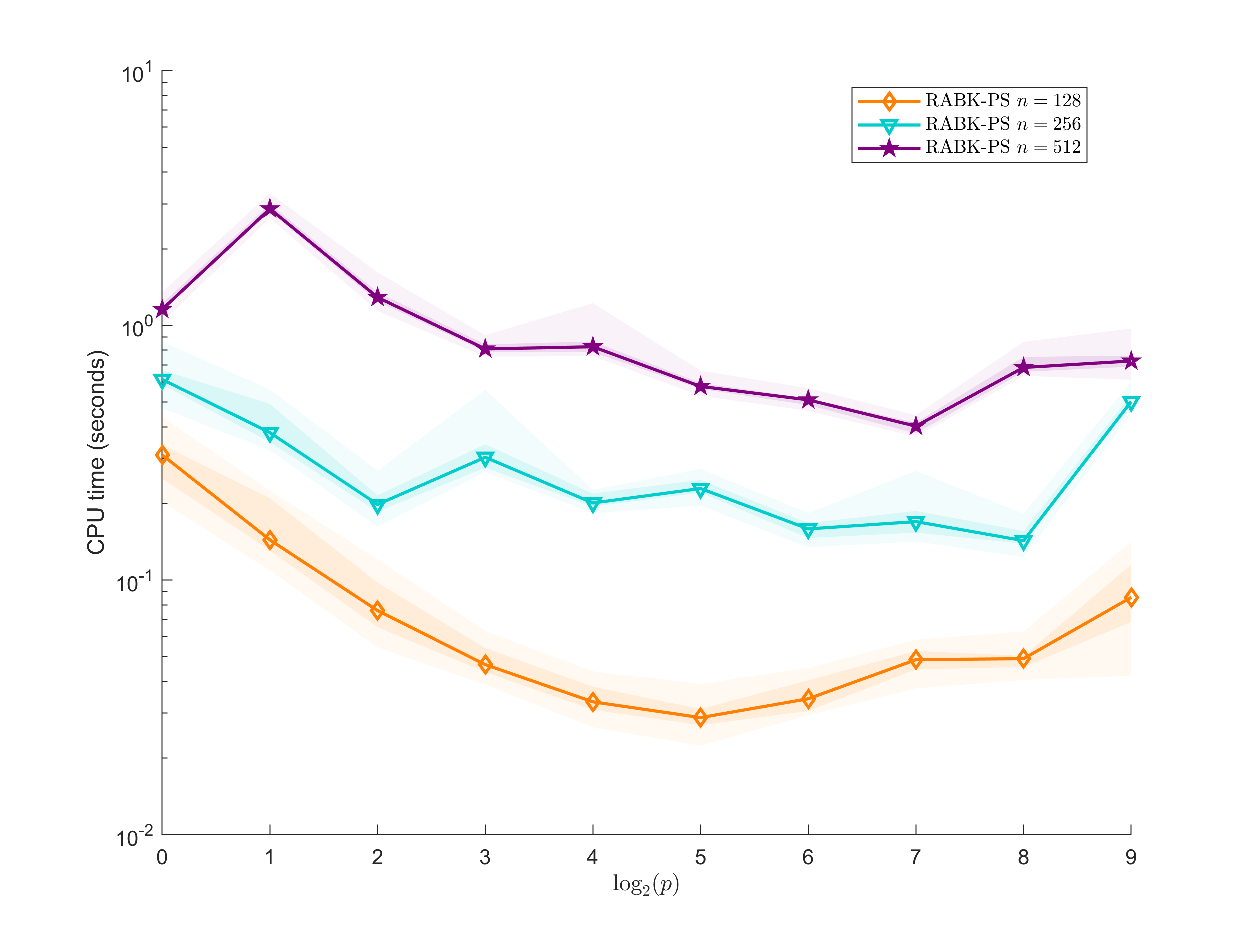}%
		}%
		
		\caption{Influence of the block size $\ell$ on RABK-PS. Top row is shown about scaled iteration count $k \cdot \ell / m$ versus $\log_{2}(\ell)$. Bottom row is shown about CPU time versus $\log_{2}(\ell)$. We set $m=512$ and $n=128, 256, 512$.}
		\label{fig:block_size_influence}
	\end{figure}

	\subsection{Effect of $\kappa_A$ on algorithmic performance}
	
	It follows from Subsection~\ref{sec:comparison_sketches}  that $\kappa_A$ has a notable impact on the convergence behavior of the algorithms. In this subsection, we further investigate this effect on the CPU time of both RIM and RIM-PS. We focus on the overdetermined case ($m > n$) as a representative setting, with $m=2000$, $n=1000$, and $\kappa_B \in \{1,5,10\}$. Based on the observation in Subsection~\ref{sec:comparison_sketches} that excessively small or large block sizes lead to slower convergence, we fix $\ell = 100$ throughout the experiments. The numerical results are displayed in Figure~\ref{fig:iter_cpu_comparison1}, where the number of iterations and the corresponding CPU time are plotted against $\kappa_A$. The top row reports the iteration counts required for convergence, while the bottom row shows the corresponding CPU time.
	
	\begin{figure}[htbp]
		\centering
		\subfigure[$\kappa_B=1$]{%
			\includegraphics[width=0.32\textwidth]{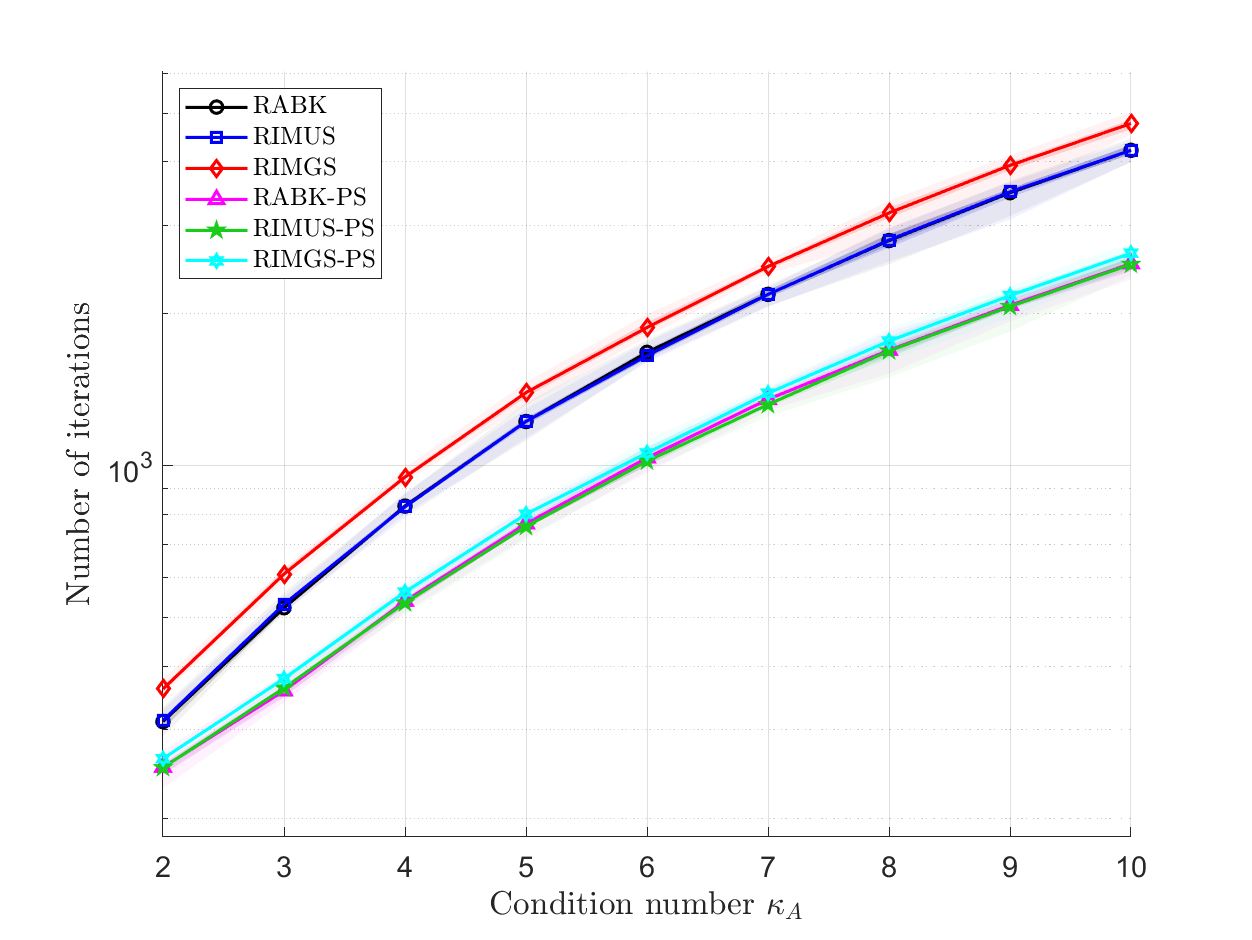}%
		}%
		\hfill
		\subfigure[$\kappa_B=5$]{%
			\includegraphics[width=0.32\textwidth]{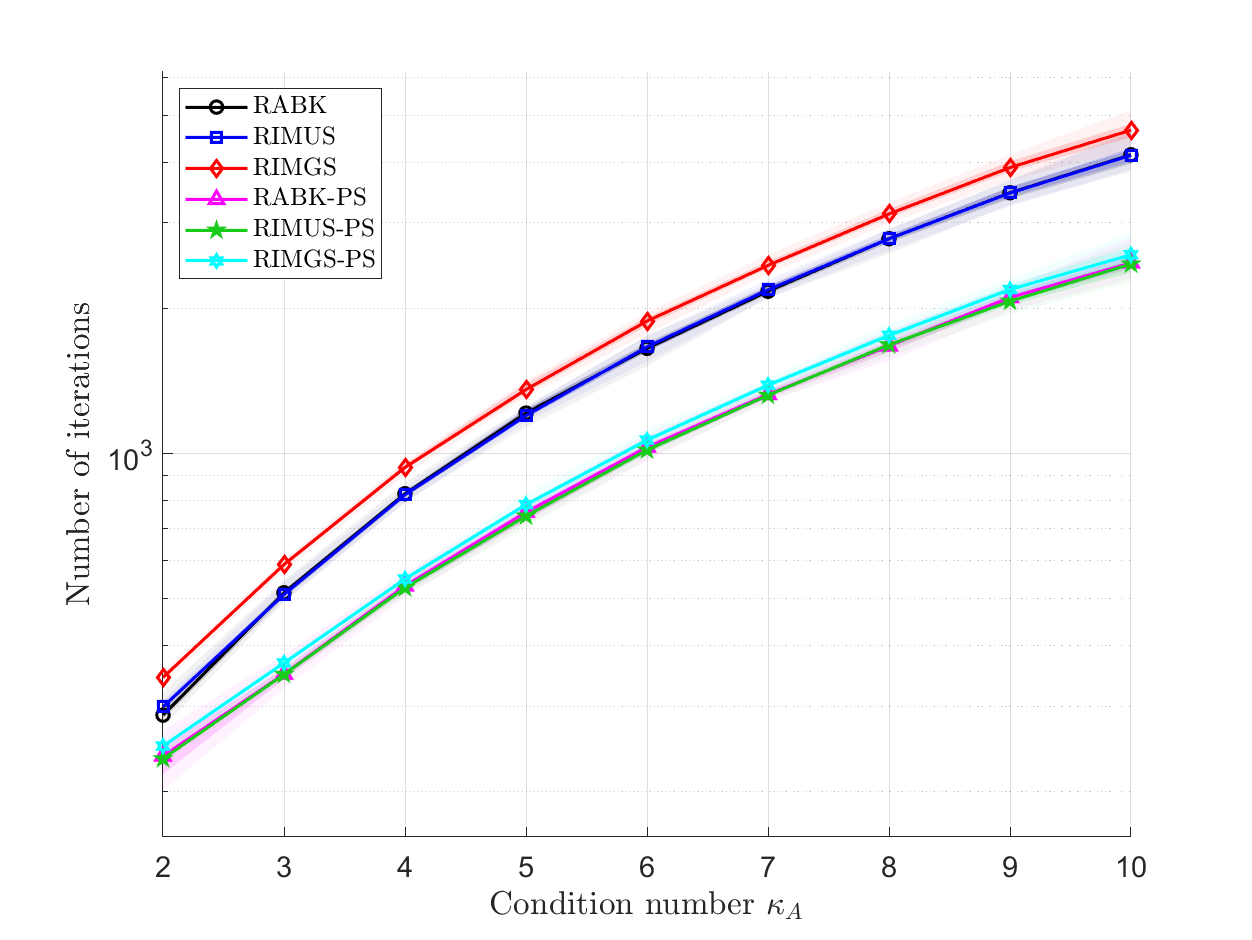}%
		}%
		\hfill
		\subfigure[$\kappa_B=10$]{%
			\includegraphics[width=0.32\textwidth]{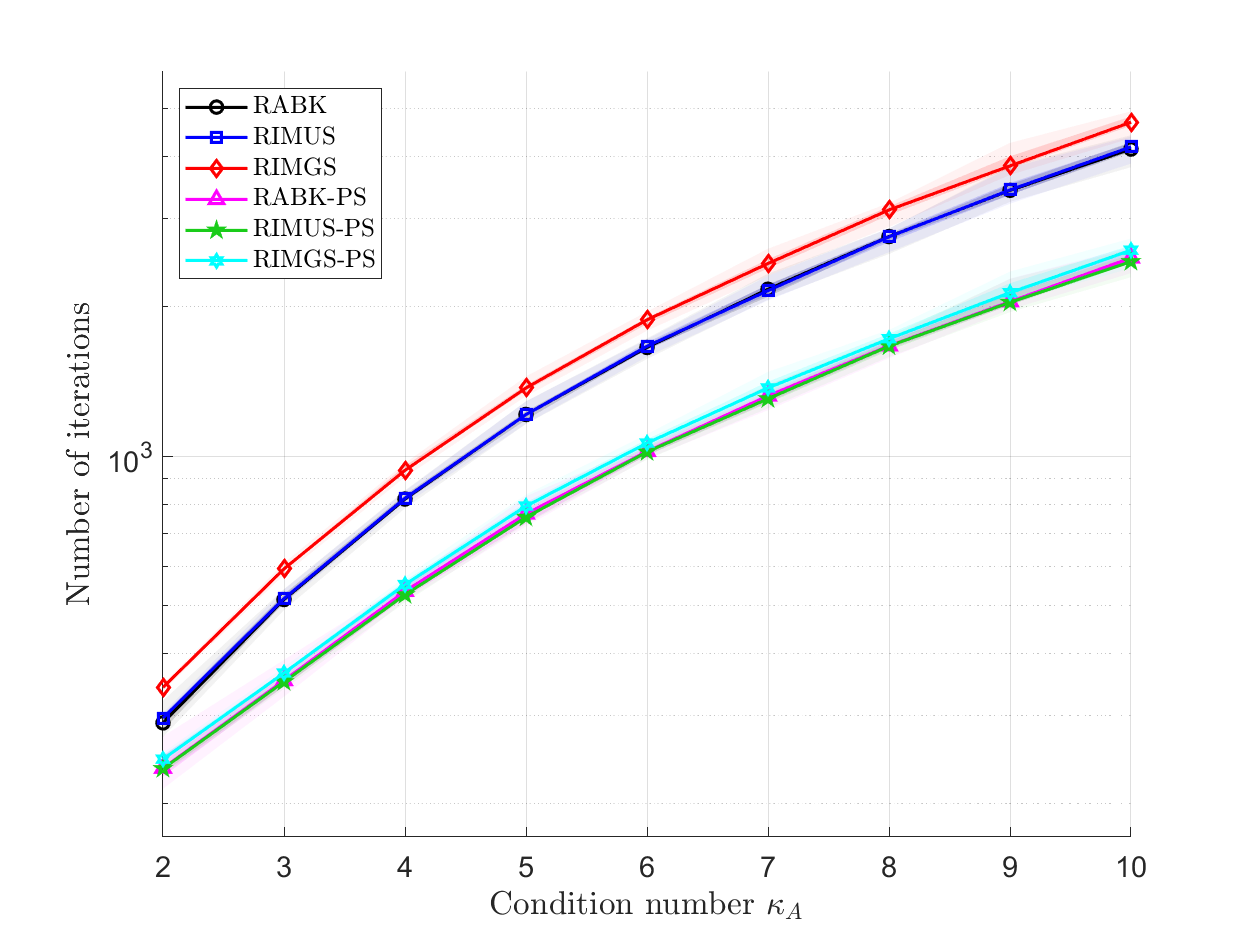}%
		}%
		
		\vspace{0.5em}
		
		\subfigure[$\kappa_B=1$]{%
			\includegraphics[width=0.32\textwidth]{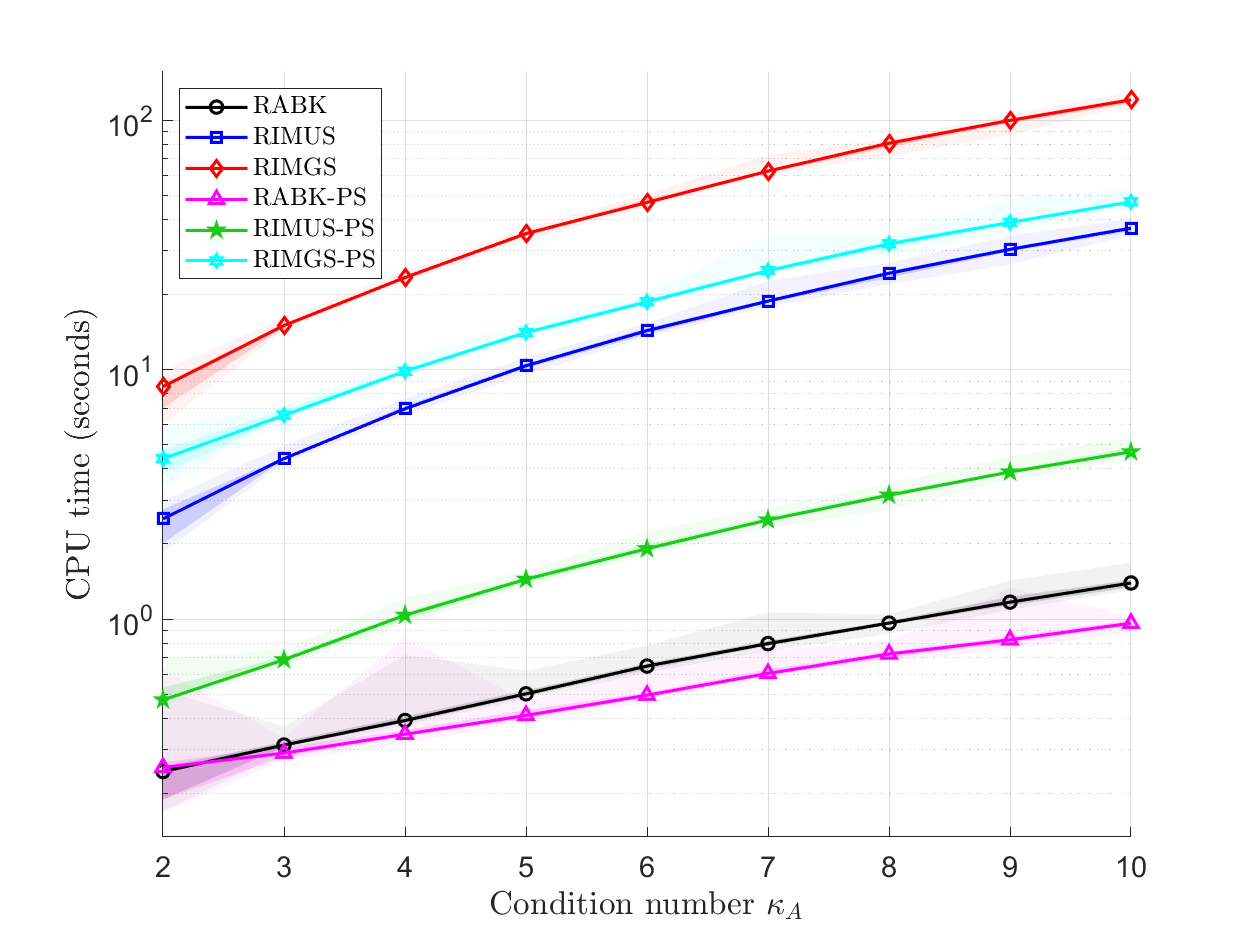}%
		}%
		\hfill
		\subfigure[$\kappa_B=5$]{%
			\includegraphics[width=0.32\textwidth]{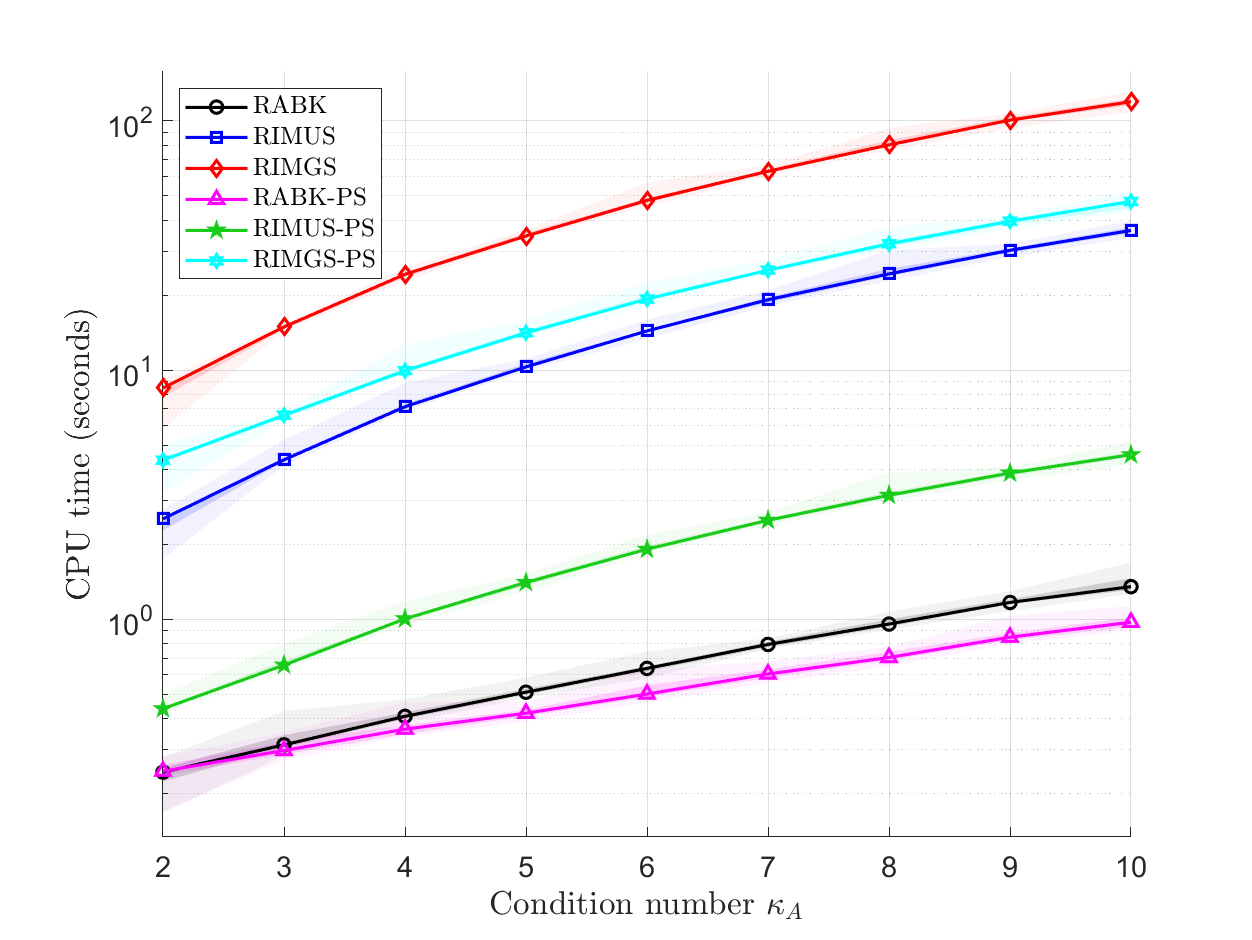}%
		}%
		\hfill
		\subfigure[$\kappa_B=10$]{%
			\includegraphics[width=0.32\textwidth]{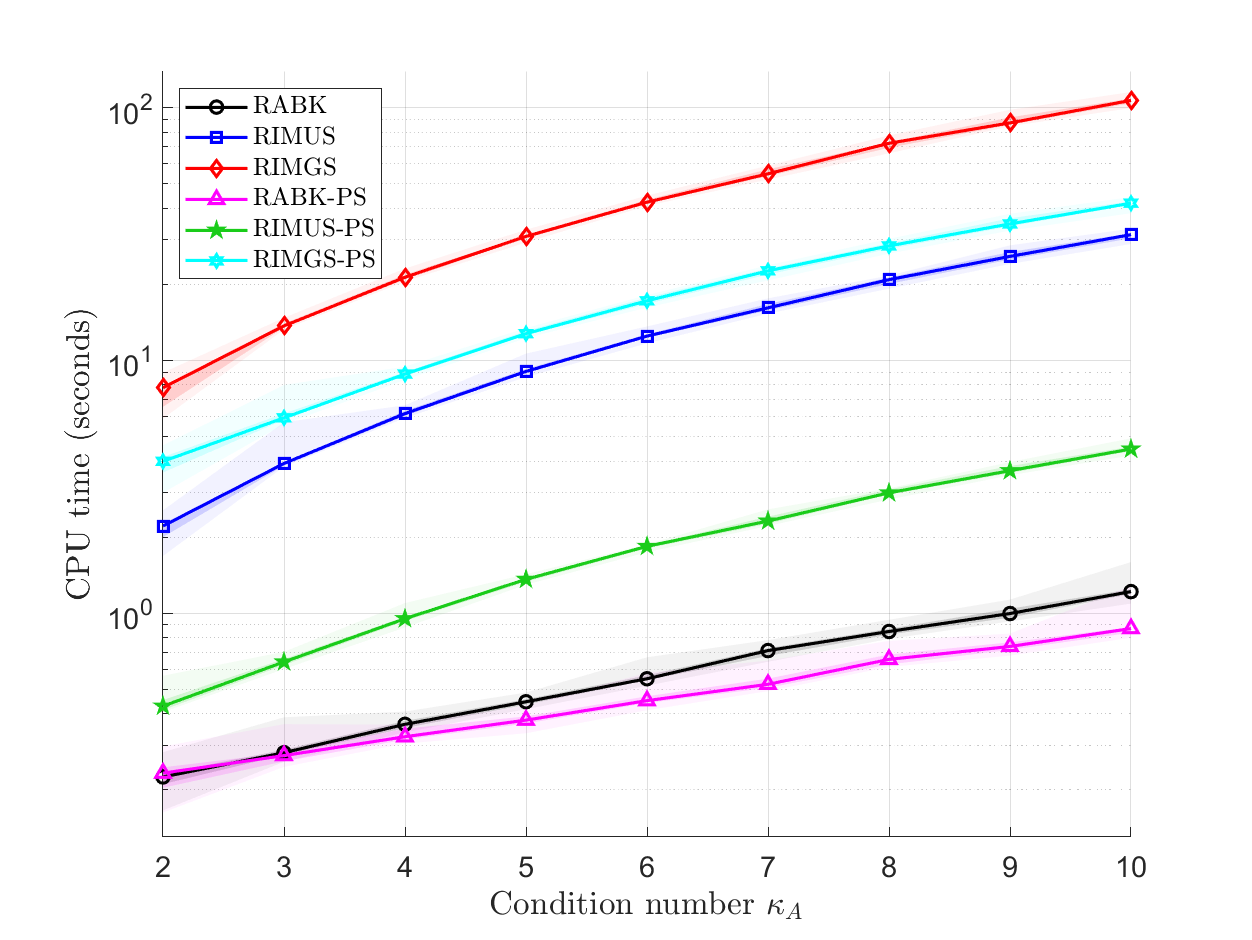}%
		}%
		
		\caption{Performance comparison RABK-PS, RIMUS-PS and RIMGS-PS with RABK, RIMUS and RIMGS for overdetermined systems under three different $\kappa_B$. We set $m=2000, n=1000, \ell=100$. Top row: Number of iterations versus $\kappa_A$. Bottom row: CPU time versus $\kappa_A$.}
		\label{fig:iter_cpu_comparison1}
	\end{figure}

	It can be seen from the top row of Figure~\ref{fig:iter_cpu_comparison1}  that RABK-PS, RIMGS-PS and RIMUS-PS, which use Polyak step-size,  require  fewer iterations to converge than their RIM counterparts. The iteration counts increase rapidly as $\kappa_A$ grows from $2$ to $10$. The bottom row shows that RABK and RABK-PS consistently outperform the other methods under the same conditions. Across all tested configurations, the Polyak step-size variants (RABK-PS, RIMUS-PS, and RIMGS-PS) achieve a reduction in CPU time compared to their RIM counterparts in nearly all cases; the only exception occurs for small $\kappa_A$, where RABK performs comparably to RABK-PS. Moreover, as $\kappa_A$ increases, the CPU time of the Polyak variants grows less sharply than that of the RIM methods. Overall, RABK-PS exhibits the fastest convergence and the lowest sensitivity to $\kappa_A$. Consequently, we select RABK-PS with a fixed block size $\ell=100$ for subsequent comparisons with other classical algorithms.
	
	\subsection{Comparison to some existing methods}
	This subsection compares the performance of RABK-PS with that of RABK, SLA, and MAP. We consider three different combinations of $\kappa_A$ and $\kappa_B$ and test these settings across varying problem scales by increasing $m$ from $1000$ to $1500$ while fixing $n=1000$. The computation is stopped when the RSE drops below the prescribed threshold $10^{-12}$.
	
	It can be seen from Figure~\ref{fig:comparison_cpu} that RABK, RABK-PS and MAP consistently outperform SLA across all test cases. In particular, RABK-PS is always the fastest algorithm. The advantage becomes more obvious as the problem scale grows, where the CPU time of SLA and MAP increase steeply with $m$, while the CPU time of RABK and RABK-PS grows at a slower rate. This is because randomized methods depend only on the selected block of rows, not on the full matrix dimensions. These results demonstrate the advantage of RIM-PS.
	
	\begin{figure}[htbp]
		\centering
		\subfigure[$n=1000$, $\kappa_A=6$, $\kappa_B=2$]{%
			\includegraphics[width=0.32\textwidth]{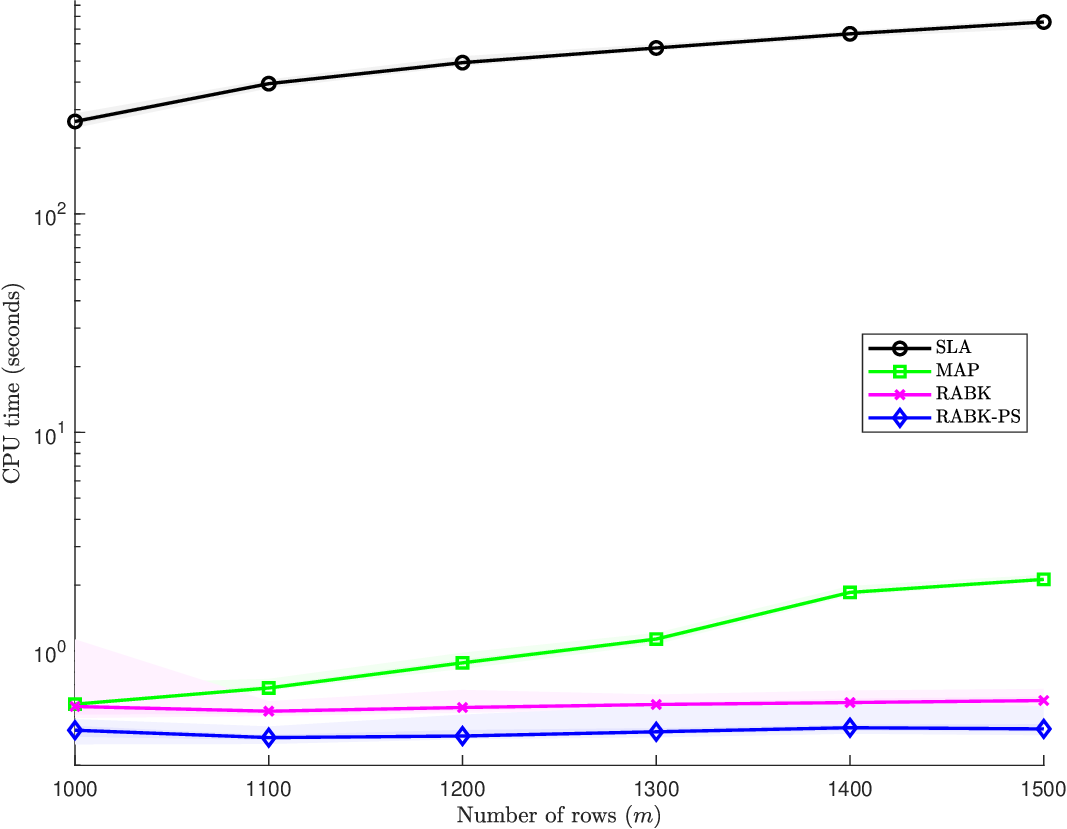}%
		}%
		\hfill
		\subfigure[$n=1000$, $\kappa_A=6$, $\kappa_B=10$]{%
			\includegraphics[width=0.32\textwidth]{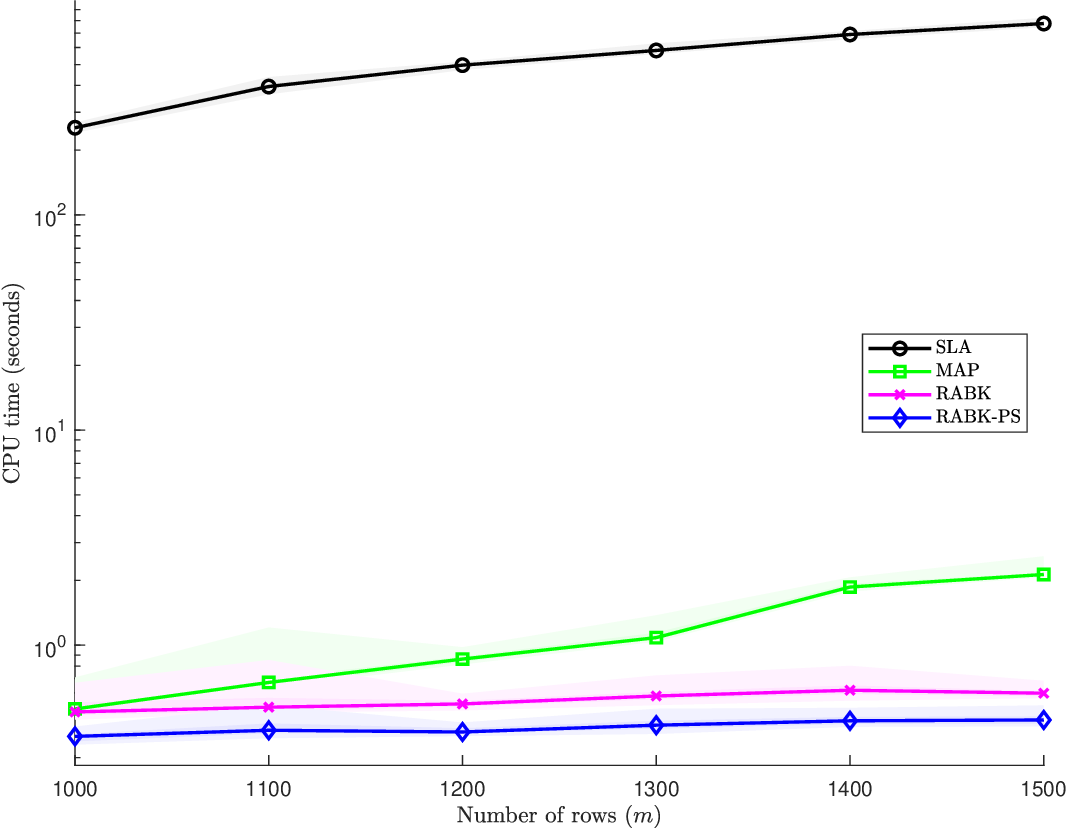}%
		}%
		\hfill
		\subfigure[$n=1000$, $\kappa_A=4$, $\kappa_B=4$]{%
			\includegraphics[width=0.32\textwidth]{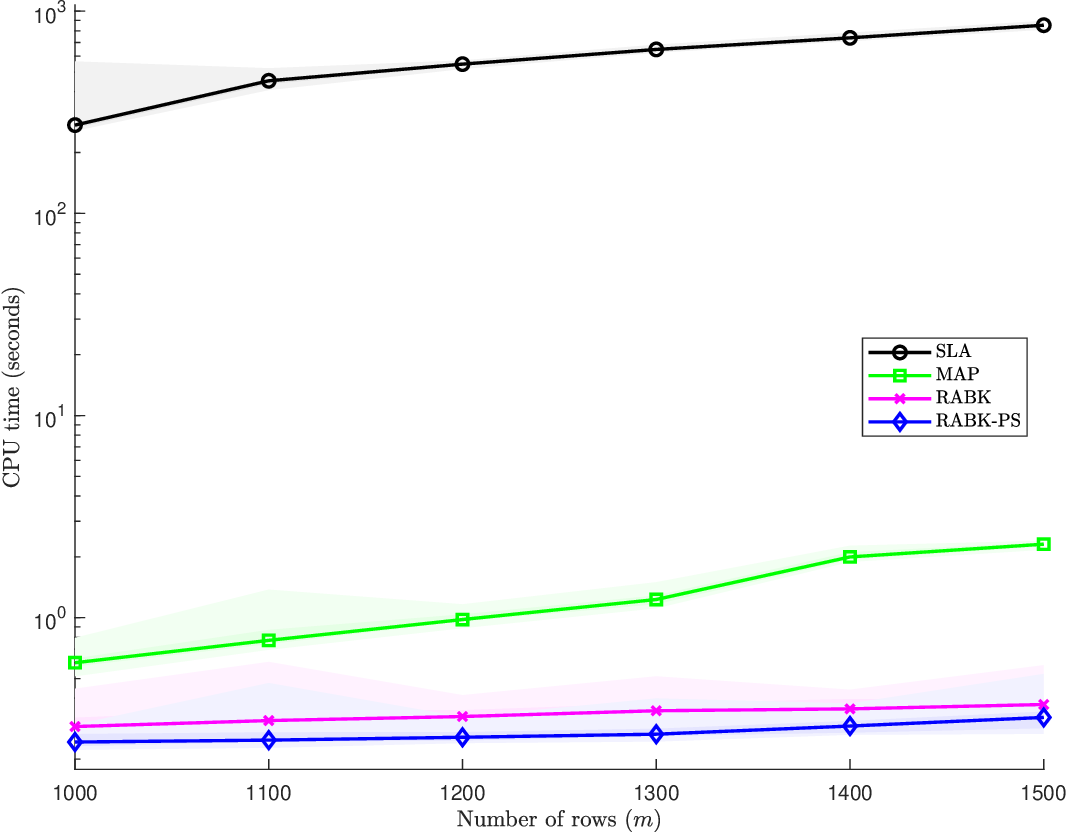}%
		}%
		\caption{CPU time comparison of SLA, MAP, RABK, and RABK-PS as the number of rows $m$ grows.}
		\label{fig:comparison_cpu}
	\end{figure}

	\section{Conclusion}\label{sec:conclusion}
	In this paper, we have studied a Polyak step-size variant of the RIM for solving the GAVE \eqref{gave}, which accommodates non-square coefficient matrices. We have established the linear convergence rate of the proposed method in expectation. Numerical experiments demonstrate that RIM-PS offers notable advantages in terms of iteration efficiency and robustness to conditioning.
	
	There are still many possible future avenues of research. For example, incorporating momentum acceleration techniques, such as Gearhart-Koshy acceleration~\cite{Rieger2023,Hegland2023} or heavy-ball momentum~\cite{Polyak1964,Ghadimi2015}, into RIM-PS may further enhance its convergence performance. Whether such combinations preserve the convergence guarantees of RIM-PS merits further investigation.

	
	
	
%
	
	\bibliography{myref}
	
\end{document}